\newtheorem{theorem}{Theorem}[section]
\newtheorem{corollary}[theorem]{Corollary}
\newtheorem{lemma}[theorem]{Lemma}
\newtheorem{proposition}[theorem]{Proposition}
\theoremstyle{definition}
\newtheorem{definition}[theorem]{Definition}
\newtheorem{example}[theorem]{Example}
\newtheorem{remark}[theorem]{Remark}
\numberwithin{equation}{section}
\title[Solving nonlinear Volterra--Fredholm integral equations ]{A numerical method for solving nonlinear Volterra--Fredholm integral equations}
\author[N. T. Binh]{Ngo Thanh Binh}
\address[N. T. Binh]{Faculty of Fundamental Science, Namdinh University of Technology Education, Nam Dinh, Vietnam}
\email{{\tt ntbinhspktnd@gmail.com}}
\author[K. V. Ninh]{Khuat Van Ninh}
\address[K. V. Ninh]{Department of Mathematics, Hanoi Pedagogical University  2, 	Phuc Yen, Vietnam}
\email{\tt kvnkhoatoan@gmail.com}
\thanks{This research is funded by Vietnam National Foundation for Science and Technology Development (NAFOSTED) under grant number 101.92.2014.51.}
\keywords{Volterra--Fredholm integral equations, quadrature method, parameter continuation method, discretization, approximation.}
\subjclass[2010]{45L05, 45G10,  47J25.}
\begin{document}

\begin{abstract}
In this paper, we introduce a numerical method for  solving nonlinear Volterra--Fredholm integral equations. Our method consists of two steps. First, we define a discretized form of the integral equation  by  quadrature methods.  We then propose an iterative method, which is based on a hybrid of the method of contractive mapping and parameter continuation method, to solve  the perturbed system of nonlinear equations obtained from discretization of the considered problem. Finally,  an example is given to demonstrate the validity and applicability of our method.
\end{abstract}

\maketitle


\section{Introduction}

\indent Many problems which arise in mathematics, physics, biology, etc., lead to integral equations. Volterra--Fredholm integral equations play an important role in the theory of  integral equations.  Since Volterra--Fredholm integral equations are usually difficult to get their exact solution, therefore, many authors have worked on analytical methods and numerical methods for approximate solution of this kind of equations. For example,  Adomian decomposition method was used to solve Volterra--Fredholm integral equations in  \cite{RefBWaz}.  H. Brunner \cite{RefJBru} and  P. J. Kauthen \cite{RefJKau} have employed the Collocation method to solve  mixed Volterra--Fredholm integral equations. The numerical solutions of the nonlinear Volterra--Fredholm integral equations by using Homotopy perturbation method was introduced in \cite{RefJGha}. Y. Mahmoudi \cite{RefJMah} and S. Yal\c{c}inba\c{s} \cite{RefJYal} developed the Taylor polynomial solutions for the nonlinear Volterra--Fredholm integral equations.  In \cite{RefJOrd},  Y. Ordokhani  applied the rationalized Haar functions to solve nonlinear Volterra--Fredholm--Hammerstein integral equations. Recently,  M. Zarebnia \cite{RefJZar} used the sinc functions to solve the nonlinear Volterra--Fredholm integral equations. F. Mirzaee and A. A. Hoseini \cite{RefJMiz} obtained an approximate solution for the nonlinear Volterra--Fredholm integral equations by  the hybrid of block--pulse function and Taylor series (HBT).  An iterative scheme for extracting approximate solutions of the Volterra--Fredholm integral equations has been presented by A. H. Borzabadi and M. Heidari \cite{RefJBor}. J. Xie et al. \cite{RefJXie} developed  the numerical technique based on block--pulse functions (BPFs)  to approximate the solutions of nonlinear Volterra--Fredholm--Hammerstein integral equations in two--dimensional spaces.  \\
\indent In this paper, we intend to present a numerical method for approximating
the solution of nonlinear Volterra--Fredholm integral equation as follows
\begin{equation}\label{eq:1.1}
x(t) +  \int\limits_a^t {K_1(t,s,x(s)) ds} + \int\limits_a^b {K_2(t,s,x(s)) ds} = g(t),\;\,a \le t \le b,
\end{equation}
where $x(t)$ is an unknown function that will be determined, $g(t), K_i(t,s,x),$ $ i = 1,2$  are known functions  and $a, b$ are known constants. At first, we use one of  frequently used quadrature methods to reduce the equation \eqref{eq:1.1} into a  perturbed system of nonlinear equations. For further information on quadrature methods in this respect, see \cite{RefBCh1,RefBPrem,RefJSto}. 
Next, we propose an iterative method to solve the obtained perturbed system of nonlinear equations.  The method is based on a hybrid of the method of contractive mapping and parameter continuation method. Parameter continuation method was suggested and developed by S. N. Bernstein \cite{RefJB} and J. Schauder \cite{RefJLS}.  Later on, V. A. Trenoghin \cite{RefBTre1,RefJTre2,RefJTre3,RefJTre4} has developed a generalized variants of the parameter continuation method and used to prove the invertibility of nonlinear operators, which map a metric space or a weak metric space into a Banach space. Y. L. Gaponenko \cite{RefJG}  proposed and justifed the parameter continuation method for solving operator equations of the second kind with a Lipschitz - continuous and monotone operator, which operates in an arbitrary Banach space. K. V. Ninh \cite{RefJNi1, RefJNi2} has studied parameter continuation method for solving the operator equations of the second kind with a sum of two operators. In \cite{RefJVe2}, V. G. Vetekha presented the application of parameter continuation method to solving the boundary 
value problem for the ordinary differential equations of second order. Parameter continuation method has some advantages that encourage us to use it. Firstly,  the properties of contractions such as iteration,  error estimates are used to find approximate solutions and estimate the errors of approximate solutions. Furthermore, this method is very simple to apply and to make an algorithm.\\
\indent The paper is organized as follows. In Section \ref{sec:2}, the parameter continuation method for solving operator equations of the second kind is briefly presented. In this section, we recall some definitions and results that will be useful in the sequel. In Section \ref{sec:3}, we transform  the equation \eqref{eq:1.1} into a  perturbed system of nonlinear equations. Then we  discuss the existence and uniqueness of the solution of the obtained perturbed system of nonlinear equations and prove that its solution converges to the exact solution of the problem. We also study approximate solutions of the  perturbed system of nonlinear equations and their error estimates. An illustrative example is  given in Section \ref{sec:4} to  illustrate the efficiency of the introduced method.  Finally, Section \ref{sec:5} draws some conclusions from the paper.
\section{Parameter continuation method for solving operator equations of the second kind}
\label{sec:2}
In this section, we  recall some definitions and results which we will use in the sequel. For details, we refer to \cite{RefJG}.\\
\indent  Let $X$ be a Banach space and $A$ be a mapping, which operates in the space $X$. Consider the operator equation of the second kind 
\begin{equation}\label{eq:2.1}
x + A(x) = f.
\end{equation}
\begin{definition}[{\cite{RefJG}}]\label{def1}
	The mapping $A$, which operates in the Banach space $X$ is called monotone  if for any elements $x_1, x_2 \in X$ and any $\varepsilon>0$ the following inequality holds 
	\begin{equation}\label{eq:2.2}
	\| x_1 -x_2 + \varepsilon \left[A(x_1) - A(x_2) \right]  \|  \ge  \|x_1 - x_2 \|. 
	\end{equation}
\end{definition}
\begin{remark}[{\cite{RefJG}}]\label{rem1}
	If $X$ is Hilbert space then the condition of monotonicity \eqref{eq:2.2} is equivalent to the classical condition 
	\[\left\langle {A(x_1) - A(x_2),x_1 - x_2} \right\rangle  \ge 0,\;\forall x_1, x_2 \in X,\]
	where $\left\langle , \right\rangle $ is an inner product in the Hilbert space $X$.
\end{remark}
\begin{lemma}[\cite{RefJG}]\label{lem2.1}
	Assume that $A$ is a monotone mapping which operates in the Banach space $X$. Then for any elements $x_1, x_2 \in X$ and any positive numbers $\varepsilon_1, \varepsilon_2, 0 < \varepsilon_1 \le \varepsilon_2 \le 1$, the following inequality holds
	\[ \| x_1 - x_2 + \varepsilon_1 \left[A(x_1) - A(x_2) \right]  \| \le \| x_1 - x_2 + \varepsilon_2 \left[A(x_1) - A(x_2) \right]  \|.  \]
\end{lemma}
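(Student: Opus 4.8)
The plan is to reduce the claimed inequality to the triangle inequality together with a single application of the monotonicity hypothesis. Write $u := x_1 - x_2$ and $v := A(x_1) - A(x_2)$, so that the assertion becomes $\| u + \varepsilon_1 v \| \le \| u + \varepsilon_2 v \|$. The key observation is that, since $0 < \varepsilon_1 \le \varepsilon_2$, the element $u + \varepsilon_1 v$ can be expressed as a convex combination of $u + \varepsilon_2 v$ and $u$: setting $t := \varepsilon_1/\varepsilon_2 \in (0,1]$, one checks directly that
\[
u + \varepsilon_1 v = t\,(u + \varepsilon_2 v) + (1-t)\,u .
\]

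From this identity the triangle inequality immediately gives $\| u + \varepsilon_1 v \| \le t\,\| u + \varepsilon_2 v \| + (1-t)\,\| u \|$. It then remains only to bound $\|u\|$ from above by $\| u + \varepsilon_2 v \|$, and this is precisely the monotonicity inequality \eqref{eq:2.2} for the pair $x_1, x_2$ with the choice $\varepsilon = \varepsilon_2 > 0$, namely $\| u \| = \| x_1 - x_2 \| \le \| x_1 - x_2 + \varepsilon_2 [A(x_1) - A(x_2)] \| = \| u + \varepsilon_2 v \|$. Substituting this into the previous estimate and using $t + (1-t) = 1$ yields $\| u + \varepsilon_1 v \| \le \| u + \varepsilon_2 v \|$, which is the desired conclusion.

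I do not anticipate any genuine obstacle: the entire argument consists of the convex-combination identity plus one invocation of Definition~\ref{def1}. It is perhaps worth noting that the hypothesis $\varepsilon_2 \le 1$ is not actually used in this step, so the lemma in fact holds for all $0 < \varepsilon_1 \le \varepsilon_2$; the restriction $\varepsilon_2 \le 1$ is presumably kept for consistency with its later use. If one prefers not to introduce the auxiliary parameter $t$, the same computation can be written out directly as $u + \varepsilon_1 v = \tfrac{\varepsilon_1}{\varepsilon_2}(u + \varepsilon_2 v) + \bigl(1 - \tfrac{\varepsilon_1}{\varepsilon_2}\bigr) u$ before applying the triangle inequality.
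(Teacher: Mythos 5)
Your proof is correct. Note that the paper does not actually prove Lemma~\ref{lem2.1} at all --- it is quoted from Gaponenko \cite{RefJG} without proof --- so there is no in-paper argument to compare against; your argument (write $u+\varepsilon_1 v$ as the convex combination $\tfrac{\varepsilon_1}{\varepsilon_2}(u+\varepsilon_2 v)+\bigl(1-\tfrac{\varepsilon_1}{\varepsilon_2}\bigr)u$, apply the triangle inequality, and absorb the term $\|u\|$ using the monotonicity condition \eqref{eq:2.2} with $\varepsilon=\varepsilon_2$) is the standard and complete one. Your side remark that the hypothesis $\varepsilon_2\le 1$ is never used is also accurate; it is retained only because the parameter $\varepsilon$ ranges over $[0,1]$ in the continuation scheme.
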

\indent The basic idea of  the parameter continuation method for solving operator equations of the second kind \eqref{eq:2.1} is as follows. Consider  a one-parametric family of equations
\begin{equation*}
x + \varepsilon A(x) = f, 0 \le \varepsilon \le 1,
\end{equation*}
which when $\varepsilon =0$ gives the trivial equation $x=f$ and when $\varepsilon =1$ gives the initial equation \eqref{eq:2.1}. Dividing $[0,1]$ into $N$ equal parts with $N$ is a natural number such that  $q = L \varepsilon_0 <1, \varepsilon_0 = \frac{1}{N}$, where $L$ is Lipschitz coefficient of the operator $A$. After  $N-1$ changes of variables
\begin{subequations}
	\begin{align}
	u &=x+ \varepsilon_0 A(x) \equiv G_1 (x), \label{eq:2.3a} \\ 
	v &= u +  \varepsilon_0 AG_1^{-1}(u) \equiv G_2(u), \label{eq:2.3b} \\
	& \; \ldots, \; \label{eq:2.3c} \\ 
	y &= \omega + \varepsilon_0 AG_1^{-1} \cdots G_{N-2}^{-1}(\omega) \equiv G_{N-1}(\omega),\label{eq:2.3d}
	\end{align}
\end{subequations}
we construct intermediate equations with contractive operators in new variables.  By virtue of the monotonicity and Lipschitz  continuity of the operator $A$, contraction coefficients of these contractive operators  equal  $q$. By shifting the parameter $\varepsilon$ step  by step $\varepsilon_0$  from $0$ to $1$  we can verify that the equation \eqref{eq:2.1}  has a unique solution.
\begin{theorem}[{\cite{RefJG}}]\label{thm2.1}
	Suppose that the mapping  $A$, which operates in the Banach space $X$ is Lipschitz - continuous and monotone. Then  the equation \eqref{eq:2.1} has a unique solution for any element $f \in X$. 
\end{theorem}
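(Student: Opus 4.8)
The plan is to make rigorous the parameter continuation scheme sketched just above the statement, organized as a finite induction over the homotopy parameter. Embed \eqref{eq:2.1} into the one‑parameter family $x + \varepsilon A(x) = f$, $0 \le \varepsilon \le 1$, which at $\varepsilon = 0$ reduces to the trivially (uniquely) solvable equation $x = f$. Since $A$ is Lipschitz, fix a Lipschitz constant $L$ and then a natural number $N$ with $q := L\varepsilon_0 < 1$, where $\varepsilon_0 = 1/N$. I would prove by induction on $k = 0, 1, \dots, N$ the statement $S_k$: \emph{the map $I + k\varepsilon_0 A$ is a bijection of $X$ onto itself}. The base case $S_0$ is immediate, and $S_N$ is exactly the conclusion of the theorem, namely that $x + A(x) = f$ has a unique solution for every $f \in X$.

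For the inductive step $S_k \Rightarrow S_{k+1}$, rewrite the equation $x + (k+1)\varepsilon_0 A(x) = f$ as $\bigl(x + k\varepsilon_0 A(x)\bigr) + \varepsilon_0 A(x) = f$. Using $S_k$, introduce the new variable $u = \bigl(I + k\varepsilon_0 A\bigr)(x)$, so that $x = \bigl(I + k\varepsilon_0 A\bigr)^{-1}(u)$ and the equation becomes
\[
u + \varepsilon_0 B_k(u) = f, \qquad B_k := A \circ \bigl(I + k\varepsilon_0 A\bigr)^{-1}.
\]
If $B_k$ is Lipschitz with constant at most $L$, then $u \mapsto f - \varepsilon_0 B_k(u)$ is a contraction of the Banach space $X$ with coefficient $q < 1$, so Banach's fixed point theorem produces a unique $u$ solving the displayed equation, hence a unique solution $x$ of the original one; since $f$ is arbitrary, this establishes $S_{k+1}$.

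The heart of the argument — and, I expect, the only genuine obstacle — is the Lipschitz estimate for $B_k$, and this is precisely where monotonicity enters (it is also why the contraction constant stays equal to $q$ at every step, as asserted in the discussion preceding the theorem). Applying Definition \ref{def1} with the parameter $k\varepsilon_0 > 0$ (or Lemma \ref{lem2.1}, if one wants the sharper bounds) gives, for all $x_1, x_2 \in X$,
\[
\bigl\| (I + k\varepsilon_0 A)(x_1) - (I + k\varepsilon_0 A)(x_2) \bigr\| \ge \| x_1 - x_2 \|,
\]
so $I + k\varepsilon_0 A$ does not decrease distances and hence its inverse is $1$‑Lipschitz on $X$. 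Writing $x_i = (I + k\varepsilon_0 A)^{-1}(u_i)$ and composing with the $L$‑Lipschitz map $A$,
\[
\| B_k(u_1) - B_k(u_2) \| = \| A(x_1) - A(x_2) \| \le L\,\| x_1 - x_2 \| \le L\,\| u_1 - u_2 \|,
\]
which is the bound used above. Note that $B_k$ is well defined on all of $X$ precisely because $S_k$ makes $(I + k\varepsilon_0 A)^{-1}$ a genuine map $X \to X$; without monotonicity this inverse could be expansive and the induction would collapse.

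Finally, $S_N$ asserts that $I + A$ is a bijection of $X$, which is exactly the claim of the theorem. I would close with the remark that the substitution $u = (I + k\varepsilon_0 A)(x)$ used in the inductive step is nothing but the composite of the changes of variables \eqref{eq:2.3a}--\eqref{eq:2.3d}: one checks $G_k \circ \cdots \circ G_1 = I + k\varepsilon_0 A$, so $(I + k\varepsilon_0 A)^{-1} = G_1^{-1} \circ \cdots \circ G_k^{-1}$ and $B_k = A\,G_1^{-1} \cdots G_k^{-1}$, which is the type of operator occurring in \eqref{eq:2.3d}.
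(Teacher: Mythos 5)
Your argument is correct and is precisely a rigorous version of the parameter continuation sketch the paper gives before the theorem (the paper itself only cites Gaponenko and does not write out the proof): the finite induction on $k$, the $1$-Lipschitz bound on $(I+k\varepsilon_0 A)^{-1}$ coming from the monotonicity inequality of Definition \ref{def1}, and the Banach fixed point argument with contraction coefficient $q=L\varepsilon_0$ at each step are exactly the intended mechanism. No gaps; the identification of your substitution with the composite $G_k\circ\cdots\circ G_1 = I+k\varepsilon_0 A$ of \eqref{eq:2.3a}--\eqref{eq:2.3d} is also accurate.
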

To find approximate solutions of the equation \eqref{eq:2.1}, Y. L. Gaponenko has  constructed the following iteration process 
\begin{equation}\label{eq:2.4}
x_{k+1} = \underbrace {-  \frac{1}{N}A(x_{k}) - \frac{1}{N}A(x_{l}) - \cdots -  \frac{1}{N}A(x_{p})}_{N\;terms}  + f,\; k, l, \ldots, p = 0, 1, 2, \ldots .
\end{equation}
The symbolic notation \eqref{eq:2.4} should be understood as the following  iteration processes, which consist of $N$ iteration processes 
\begin{subequations}
	\begin{align}
	x_{k+1}& = - \varepsilon_0 A( x_k) + u_l,\; k =0, 1, 2, \ldots,\label{eq:2.5a}\\
	u_{l+1}& = - \varepsilon_0 A G_1^{-1}(u_l)+ v_c,\; l=0,1, 2,  \ldots ,\label{eq:2.5b}\\
	& \; \ldots,\label{eq:2.5c}\\
	y_{p+1} & = - \varepsilon_0 A G_1^{-1} \cdots G_{N-1}^{-1}(y_p) + f, \; p=0, 1, 2, \ldots.\label{eq:2.5d}
	\end{align}
\end{subequations}
\indent For simplicity, assume that $A (0) = 0$ and the number of steps in each iteration scheme of the iteration process \eqref{eq:2.4} is the same and equals $n_0$. Denote $x(n_0,N) \equiv x_{n_0}$ as the approximate solutions of the equation \eqref{eq:2.1}, which is constructed by the iteration process \eqref{eq:2.4}.  In this case, Y. L. Gaponenko received the error estimations of approximate solutions of the equation \eqref{eq:2.1}, which are presented in the following theorem. 
\begin{theorem}[{\cite{RefJG}}]\label{thm2.2}
	Assume that the conditions of Theorem \ref{thm2.1} are satisfied. Then the sequence of approximate solutions $\{x(n_0,N)\},\; n_0 = 1, 2, \ldots$ constructed by iteration process \eqref{eq:2.4} converges to the exact solution $x^*$ of the equation \eqref{eq:2.1}. Moreover, the following estimates hold 
	\begin{equation}\label{eq:2.6}
	\|x(n_0,N) - x^* \| \le \frac{q^{n_0+1}}{1-q} \frac{e^{qN}-1}{e^q - 1} \|f\|, 
	\end{equation}
	where $L$ is Lipschitz coefficient of the operator $A, N$ is the smallest natural number such that $q = \frac{L}{N}<1, n_0 = 1, 2, \ldots$.
\end{theorem}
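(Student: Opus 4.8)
The plan is to analyze the full iteration process \eqref{eq:2.4} by tracking the error produced in each of its $N$ constituent sub-iterations \eqref{eq:2.5a}--\eqref{eq:2.5d}, and then to compose these errors through the chain of maps $G_1, \dots, G_{N-1}$. First I would recall that each operator appearing on the right-hand side of \eqref{eq:2.5a}--\eqref{eq:2.5d}, namely $x \mapsto -\varepsilon_0 A G_1^{-1} \cdots G_{j-1}^{-1}(x)$, is a contraction with coefficient $q = L/N < 1$; this is exactly the content of the construction sketched before Theorem \ref{thm2.1}, using monotonicity (via Lemma \ref{lem2.1}) to control the inverses $G_j^{-1}$ and Lipschitz continuity to get the factor $L$. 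Since $A(0) = 0$, the fixed points of these contractions are, successively, $x^*$, $u^* = G_1(x^*)$, $v^* = G_2(u^*)$, and so on, with the last fixed point $y^*$ satisfying $y^* + \varepsilon_0 A G_1^{-1}\cdots G_{N-1}^{-1}(y^*) = f$.

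Next, for a single contraction with coefficient $q$ started at $0$ and run $n_0$ steps toward a fixed point $z^*$, the standard estimate gives $\|z_{n_0} - z^*\| \le \frac{q^{n_0+1}}{1-q}\|z_0 - z^*\| = \frac{q^{n_0+1}}{1-q}\|z^*\|$, using that the first increment has norm $\le q\|z^*\|$ when $A(0)=0$. The key step is then to propagate these per-level errors back down the chain. Writing $\delta_j$ for the error at level $j$ after $n_0$ iterations, I would show a recursion of the form $\delta_{j} \le q^{n_0+1}/(1-q) \cdot \|(\text{level-}j\text{ fixed point})\| + (\text{Lipschitz constant of } G_j^{-1}) \cdot \delta_{j+1}$, because the inhomogeneous term feeding sub-iteration $j$ is the approximate output of sub-iteration $j-1$ rather than the exact fixed point. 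The Lipschitz constant of each $G_j^{-1}$ is bounded by $1/(1-q)$ (from the definition $G_j(x) = x + \varepsilon_0 A(\cdots)$ together with $L\varepsilon_0 = q$), and the norms of the successive fixed points can be bounded in terms of $\|f\|$ by $\|z^*_{(j)}\| \le \|f\|/(1-q)^{\,j}$ or similar. Unrolling the recursion over $j = 1, \dots, N-1$ produces a geometric-type sum in $q$ and in $1/(1-q)$; the claimed bound \eqref{eq:2.6}, with the factor $\frac{e^{qN}-1}{e^q-1}$, should emerge after bounding $\sum_{j} \big(\tfrac{q}{1-q}\big)^{j}$-style terms by the partial sum $\sum_{j=0}^{N-1} e^{qj} = \frac{e^{qN}-1}{e^q-1}$, using the elementary inequality $\frac{1}{1-q} \le e^{q}/(\text{something})$ or $(1-q)^{-1} \le e^{q/(1-q)}$ to convert powers of $(1-q)^{-1}$ into exponentials of $q$.

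The convergence assertion then follows immediately: as $n_0 \to \infty$ the factor $q^{n_0+1}$ tends to $0$ while $N$ (hence $q$) is held fixed, so $\|x(n_0,N) - x^*\| \to 0$.

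I expect the main obstacle to be the bookkeeping in the error-propagation recursion — specifically, getting the constants to line up so that the accumulated factors collapse exactly into $\frac{1}{1-q}\cdot\frac{e^{qN}-1}{e^q-1}$ rather than some cruder bound. This requires a careful choice of which quantity to call "the error at level $j$" and a clean estimate of $\|G_1^{-1}\cdots G_{j}^{-1}\|$-type compositions; the exponential factor is the telltale sign that one is summing a geometric series whose ratio is itself close to $1 + q$, and matching that precisely is where the delicate part of the argument lies.
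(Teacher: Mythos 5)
First, a point of reference: the paper does not prove Theorem \ref{thm2.2} at all --- it is quoted from Gaponenko \cite{RefJG} as background, so there is no in-paper proof to compare your argument against. I can only judge your proposal on its own terms and against the mechanism the paper reproduces later for the discrete operators (the proofs of Theorem \ref{thm3.3} and Theorem \ref{thm3.4}). Your skeleton --- a per-level contraction fixed-point estimate plus propagation of the error through the chain $G_1^{-1}\cdots G_j^{-1}$, with the fixed points $x^*$, $G_1(x^*)$, $G_2(G_1(x^*))$, \dots\ --- is the right one.

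The concrete gap is your assignment of the Lipschitz constant $1/(1-q)$ to each $G_j^{-1}$. The entire purpose of the monotonicity hypothesis is that this constant is $1$: Definition \ref{def1} gives $\|G_j(x)-G_j(y)\|\ge\|x-y\|$ directly, so each $G_j^{-1}$ is nonexpansive --- this is exactly the computation the paper performs for $P^{-1}$ in the proof of Theorem \ref{thm3.3}, and by Lemma \ref{lem2.1} the monotonicity (and hence the $L$-Lipschitz bound) is inherited by each composite $AG_1^{-1}\cdots G_{j-1}^{-1}$, which is what makes every level a contraction with the same coefficient $q$. With your weaker bound two things break. First, $\varepsilon_0 AG_1^{-1}\cdots G_{j-1}^{-1}$ would only be Lipschitz with constant $q/(1-q)^{j-1}$, which exceeds $1$ for moderate $j$ (and $q=L/N$ is typically close to $1$, since $N$ is the \emph{smallest} integer with $L/N<1$), so the later sub-iterations would not be $q$-contractions and the per-level estimate collapses. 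Second, even granting contractivity, unrolling your recursion accumulates powers of $(1-q)^{-1}$, and since $(1-q)^{-1}>e^{q}$ for $0<q<1$ these cannot be dominated by $\sum_{j=0}^{N-1}e^{qj}=\frac{e^{qN}-1}{e^{q}-1}$; your fallback $(1-q)^{-1}\le e^{q/(1-q)}$ only yields $e^{qN/(1-q)}$, which is not \eqref{eq:2.6}. The exponential factor actually comes from the \emph{forward} maps: since $A(0)=0$ forces $G_j(0)=0$, one has $\|G_j(w)\|\le(1+q)\|w\|$, so the data entering level $j$ has norm at most $(1+q)^{j}\|f\|\le e^{qj}\|f\|$, and summing the $N$ per-level errors $\frac{q^{n_0+1}}{1-q}\,e^{qj}\|f\|$ over $j=0,\dots,N-1$ gives exactly the stated bound. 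With the Lipschitz constant of $G_j^{-1}$ corrected to $1$ (and a minor fix to your starting-point bookkeeping, which as written yields $q^{n_0}$ rather than $q^{n_0+1}$), your plan goes through.
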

\section{Main results}
\label{sec:3}
\indent Supposing that the discussed integral equation \eqref{eq:1.1} has solution. We will consider  the  equation 
\eqref{eq:1.1}  under the following  assumptions:
\begin{enumerate}
		\item [(i)] $g(t) \in C^{\nu}[a,b], \nu \ge 2$;
		\item [(ii)] $K_1(t,s,x(s)),  K_2(t,s,x(s))$   are differentiable continuous functions up
		to order  $\nu$ on  $\Omega = [a,b] \times [a,b] \times \mathbb{R}$, where $\nu \ge 2$;
	\item [(iii)] $K_1(t,s,x)$ satisfies a Lipschitz condition of the type
	\[  \left| K_1(t,s,x) - K_1(t,s,\overline{x})    \right| \le | \psi (t,s)| \left|x-\overline{x}\right|,   \]
	for all $a \le t,s \le b$ and for all reals $x,\overline{x}$, where  $\int\limits_a^t {|\psi(t,s)|^2ds} \le Q^2(t)$ and $\int\limits_a^b{Q^2(t)dt} \le M^2 < + \infty$;
	\label{item:1}
	\item [(iv)] $K_2(t,s,x)$ satisfies a Lipschitz condition of the type
	\[  \left| K_2(t,s,x) - K_2(t,s,\overline{x})    \right| \le \left|\phi(t,s) \right|  \left|x-\overline{x}\right|,      \]
	for all $a \le t,s \le b$ and for all reals $x, \overline{x}$, where $ \int\limits_a^b \int\limits_a^b {\left|\phi(t,s) \right|^2dsdt}=L^2< + \infty$;
	\label{item:2}
	\item [(v)] $K_2(t,s,x)$ satisfies the condition
	\[  \int\limits_a^b{ \left\{\int\limits_a^b { \left[ K_2(t,s,x(s)) - K_2(t,s,\overline{x}(s))\right] ds}\right\} \left[x(t) - \overline{x}(t)\right]dt} > 0,\] 
	for all  $x(t), \overline{x}(t) \in C^{\nu}[a,b]$ with $x(t) \ne \overline{x}(t)$.
	\label{item:3}
\end{enumerate}

\indent At the beginning, we transform the  equation \eqref{eq:1.1} into a discretized  form. Let $\mathsf{\Pi} =  \left\{a = t_0, t_1, \ldots, t_{n-1}, t_n = b\right\}$ be an equidistant partition of $[a,b]$ where $h = t_{i+1} - t_i, \; i = 0, 1, \ldots, n-1$ is the discretization parameter of the partition.  Now, if $x^*(t)$ is an analytical solution of 
\eqref{eq:1.1}, then for the partition $\mathsf{\Pi}$ on $[a,b]$, we have
\begin{equation}\label{eq:3.1}
x^*(t_i) +   \int\limits_a^{t_i} {K_1(t_i,s, x^*(s))ds} + \int\limits_a^b {K_2(t_i,s, x^*(s))ds} = g(t_i),\; i = 0, 1, \ldots, n.
\end{equation}
In \eqref{eq:3.1}, the  integral term can be estimated by a numerical method of integration, e.g. Newton--Cotes methods. Therefore, by taking equidistant partition $\mathsf{\Pi}$, as above with $h = s_{i+1} - s_i, \;i = 0, 1, \ldots, n-1$ and also the known weights $w_{i_j}, \; j = 0, 1, \ldots, i$ for interval $[a,t_i]$ and $w_r, \; r = 0, 1, \ldots, n$ for interval $[a,b]$, equality \eqref{eq:3.1} can be written as
\begin{align}
 x^*_i  +  & \sum\limits_{j = 0}^{i }{w_{i_j} K_1(t_i, s_j, x^*_j)} + O(h^{\nu_1})  +  \sum\limits_{r = 0}^{n }{w_r K_2(t_i, s_r, x^*_r)} + O(h^{\nu_1})= g_i, \nonumber \\
& i = 0, 1, \ldots, n, \label{eq:3.2}
\end{align}
 where $x^*_i = x^*(t_i), g_i = g(t_i),\; i = 0, 1, \ldots, n$,  $2 \le \nu_1 \le \nu $   and  depend upon the used method of Newton--Cotes for estimating  the integrals in \eqref{eq:3.1}. 
  From \eqref{eq:3.2},  we have
\begin{equation}\label{eq:3.3}
x^*_i +    \sum\limits_{j = 0}^{i }{w_{i_j} K_1(t_i, s_j, x^*_j)} +     \sum\limits_{r = 0}^{n }{w_r K_2(t_i, s_r, x^*_r)} + O(h^{\nu_1})  = g_i, \;i = 0, 1,  \ldots, n.
\end{equation}
\indent  For partition $\mathsf{\Pi}$, we consider a perturbed  system of nonlinear equations  obtained by neglecting the truncation error of \eqref{eq:3.1} as follows
\begin{equation}\label{eq:3.4}
\xi_i +    \sum\limits_{j = 0}^{i }{w_{i_j} K_1(t_i, s_j, \xi_j)}  +  \sum\limits_{r = 0}^{n }{w_r K_2(t_i, s_r, \xi_r)}  = g_i, \; i = 0, 1,  \ldots, n.
\end{equation}
The perturbed system of nonlinear equations \eqref{eq:3.4} can be rewritten as 
\begin{equation}\label{eq:3.5}
\xi  + \Phi(\xi) + F(\xi) = g,
\end{equation}
where $ \xi =(\xi_0, \xi_1, \ldots,\xi_n)^T,  g = (g_0, g_1, \ldots, g_n)^T,  \Phi (\xi) = (\varphi_0(\xi), \varphi_1(\xi), \ldots,$ $ \varphi_n(\xi))^T $ and
$F(\xi) = (f_0(\xi), f_1(\xi), \ldots, f_n(\xi))^T $
with 
\[ \varphi_i(\xi) =  \sum\limits_{j = 0}^{i}{w_{i_j} K_1(t_i, s_j, \xi_j)}, \; f_i(\xi) =  \sum\limits_{r = 0}^{n }{w_r K_2(t_i, s_r, \xi_r)},\; i= 0, 1, \ldots, n.\]  
\indent  For partition $\mathsf{\Pi}$ and the known weights $w_i, \;i = 0, 1, \ldots, n$, we define an inner product in  $\mathbb{R}^{n+1}$ by 
\[ \left\langle \xi, \overline{\xi}\right\rangle = \sum\limits_{i = 0}^n{w_i \, \xi_i\,  \overline{\xi}_i}, \; \forall  \xi =(\xi_0, \xi_1, \ldots,\xi_n)^T, \overline{\xi} = (\overline{\xi}_0, \overline{\xi}_1, \ldots, \overline{\xi}_n)^T \in \mathbb{R}^{n+1}.\]
This inner product induces the norm
\[ \| \xi\| = \sqrt{\left\langle\xi,\xi \right\rangle } =  \left( \sum\limits_{i = 0}^n{w_i |\xi_i|^2} \right)^{\frac{1}{2}}. \]
\indent The following proposition gives us the property of the mapping $\Phi$.
\begin{proposition}\label{pro2.1}
	Let the assumption {\rm (iii)} be satisfied. Then 
	\begin{equation}\label{eq:3.6}
	\| \Phi^m (\xi)  - \Phi^m (\overline{\xi}) \| \le \frac{M^m}{ \sqrt{(m-1)!} } \|\xi - \overline{\xi} \|,\; \forall \xi, \overline{\xi} \in \mathbb{R}^{n+1}
	\end{equation}
	for some positive integer $m$.
\end{proposition}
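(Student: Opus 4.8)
The plan is to exploit the ``lower triangular'' (discrete Volterra) structure of $\Phi$: the $i$-th component $\varphi_i(\xi)$ depends only on $\xi_0,\dots,\xi_i$, with a Lipschitz behaviour governed by the weighted kernel values $w_{i_j}|\psi(t_i,s_j)|$. First I would convert assumption {\rm (iii)} into a pointwise estimate for the components of $\Phi(\xi)-\Phi(\overline{\xi})$. Using the positivity of the quadrature weights and the Lipschitz bound on $K_1$,
\[
|\varphi_i(\xi)-\varphi_i(\overline{\xi})|\le \sum_{j=0}^{i} w_{i_j}\,|\psi(t_i,s_j)|\,|\xi_j-\overline{\xi}_j|,\qquad i=0,1,\dots,n,
\]
and then, applying the Cauchy--Schwarz inequality to the right-hand side after writing each summand as $\bigl(\sqrt{w_{i_j}}\,|\psi(t_i,s_j)|\bigr)\bigl(\sqrt{w_{i_j}}\,|\xi_j-\overline{\xi}_j|\bigr)$ and invoking the quadrature counterparts of the bounds in {\rm (iii)} (so that $\sum_{j=0}^{i}w_{i_j}|\psi(t_i,s_j)|^2\le Q^2(t_i)$ and $w_{i_j}\le w_j$ for $j\le i$), one gets the pointwise estimate
\[
|\varphi_i(\xi)-\varphi_i(\overline{\xi})|^2\le Q^2(t_i)\sum_{j=0}^{i} w_j\,|\xi_j-\overline{\xi}_j|^2 .
\]

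Next I would iterate this along the composition $\Phi^m=\Phi\circ\Phi^{m-1}$. Put $\delta^{(k)}:=\Phi^k(\xi)-\Phi^k(\overline{\xi})$ and $\rho_k(i):=\sum_{j=0}^{i} w_j\,|\delta^{(k)}_j|^2$; applying the previous inequality with $\xi,\overline{\xi}$ replaced by $\Phi^{k-1}(\xi),\Phi^{k-1}(\overline{\xi})$ gives $|\delta^{(k)}_i|^2\le Q^2(t_i)\,\rho_{k-1}(i)$, whence
\[
\rho_k(i)\le \sum_{l=0}^{i} w_l\,Q^2(t_l)\,\rho_{k-1}(l),\qquad \rho_0(i)\le \|\xi-\overline{\xi}\|^2 .
\]
Writing $q_l:=w_l\,Q^2(t_l)$ and $\mathcal P_i:=\sum_{l=0}^{i}q_l$ (the quadrature of $\int_a^{t_i}Q^2(s)\,ds$, so that $\mathcal P_n\le M^2$ by {\rm (iii)}), an induction on $k$ unfolds this to $\rho_k(i)\le \|\xi-\overline{\xi}\|^2\sum_{0\le l_1\le\cdots\le l_k\le i}q_{l_1}\cdots q_{l_k}$. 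The nested sum is the discrete analogue of the simplex integral $\frac1{k!}\bigl(\int_a^{t_i}Q^2(s)\,ds\bigr)^k$; estimating it by $\mathcal P_i^{\,k}/(k-1)!$ (the smallness of the individual weights $q_l$ absorbs the diagonal contributions coming from the non-strict inequalities $l_1\le\cdots\le l_k$) and taking $k=m$, $i=n$ yields $\|\delta^{(m)}\|^2=\rho_m(n)\le \dfrac{M^{2m}}{(m-1)!}\,\|\xi-\overline{\xi}\|^2$, which is \eqref{eq:3.6} after taking square roots.

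The main obstacle is precisely this last combinatorial step, i.e. extracting the factorial. A plain induction on $m$ that uses only the first-level Lipschitz constant $M$ destroys it and gives merely $\|\Phi^m(\xi)-\Phi^m(\overline{\xi})\|\le M^m\|\xi-\overline{\xi}\|$; one is forced to retain the dependence on the node index $i$ through the auxiliary quantities $\rho_k(i)$ and then to bound the resulting nested sum, which is the discrete shadow of the classical fact that the $m$-th iterated kernel of a Volterra operator with Hilbert--Schmidt kernel of norm $M$ again has Hilbert--Schmidt norm at most $M^m/\sqrt{(m-1)!}$. A secondary, more routine, point is to justify the quadrature versions of the conditions in {\rm (iii)} that were used above ($\sum_j w_{i_j}|\psi(t_i,s_j)|^2\le Q^2(t_i)$, $w_{i_j}\le w_j$ for $j\le i$, and $\sum_l w_l Q^2(t_l)\le M^2$), which hold for the Newton--Cotes rules employed in the discretization.
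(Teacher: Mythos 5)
Your argument is correct in substance but takes a genuinely different route from the paper. The paper never manipulates the discrete sums directly for the factorial gain: it defines the continuous Volterra operator $(Vx)(t)=\int_a^t K_1(t,s,x(s))\,ds$, proves the classical $L^2$ estimate $\int_a^b|(V^mx)(t)-(V^m\overline{x})(t)|^2dt\le \frac{M^{2m}}{(m-1)!}\int_a^b|x-\overline{x}|^2dt$ via the simplex identity $\int_a^tQ^2(s_1)ds_1\int_a^{s_1}Q^2(s_2)ds_2\cdots=\frac{1}{(m-1)!}\bigl(\int_a^tQ^2(s)ds\bigr)^{m-1}$, and then transfers this inequality to $\Phi^m$ by discretizing it with Newton--Cotes rules and absorbing the $O(h^{\nu_1})$ truncation errors ``for sufficiently large $n$.'' You instead run the entire iterated-kernel argument at the discrete level, tracking the partial sums $\rho_k(i)$ and bounding the resulting complete homogeneous symmetric sum $\sum_{l_1\le\cdots\le l_k\le i}q_{l_1}\cdots q_{l_k}$. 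Your version is arguably cleaner in that it avoids the paper's repeated (and somewhat loosely justified) ``assume the inequality is strict, discretize, drop the $O(h^{\nu_1})$'' steps, but it is not uniformly valid in $n$: the bound $h_m(q)\le \mathcal P_i^{\,m}/(m-1)!$ genuinely fails for coarse partitions (e.g.\ $q_0=q_1=\tfrac12$, $m=4$ gives $h_4=\tfrac{5}{16}>\tfrac{1}{3!}$), and is rescued only because $h_m(q)=\frac{\mathcal P^m}{m!}+O(\max_l q_l)$ leaves a factor $m$ of slack over the target $\mathcal P^m/(m-1)!$ once $h$ is small. So be explicit that your combinatorial step, like the paper's error-absorption step, holds only for sufficiently large $n$; with that caveat, and the routine verification of the quadrature analogues of (iii) (nonnegative weights, $\sum_jw_{i_j}|\psi(t_i,s_j)|^2\le Q^2(t_i)$ up to $O(h^{\nu_1})$, $w_{i_j}\le w_j$), your proof is a valid alternative that reaches the same constant $M^{2m}/(m-1)!$.
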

\begin{proof} 
	Define the operator $V$ as 
	\[  (Vx)(t)= \int\limits_a^t {K_1(t,s,x(s)) ds}, \; \forall x(t) \in L^2[a,b].    \]
	First, we prove that \eqref{eq:3.6} is hold for $m=1$. From (iii),  we have
	\begin{gather*}
	\begin{split}
	 |(Vx)(t) - (V\overline{x})(t)|  & = \left| \int\limits_a^t {\left[ K_1(t,s,x(s))  - K_1(t,s,\overline{x}(s)) \right] ds} \right| \\
	& \le  \int\limits_a^t{ \left| K_1(t,s,x(s))  - K_1(t,s,\overline{x}(s)) \right| ds} \\
	& \le   \int\limits_a^t{ \left|\psi(t,s) \right|  \left| x(s)  - \overline{x}(s) \right| ds}
	\end{split} 
	\end{gather*}
	for all $x(t), \overline{x}(t) \in C^{\nu}[a,b]$. 
	From this and Cauchy--Schwarz inequality, we obtain
	\vspace*{-0.3truecm}
	\begin{gather}\label{eq:3.7}
	\begin{split}
	 |(Vx)(t) - (V\overline{x})(t)|^2  &  \le  \int\limits_a^t{\left|\psi(t,s) \right|^2 ds }  \int\limits_a^t{  \left| x(s)  - \overline{x}(s) \right|^2 ds}\\
	&  \le Q^2(t) \int\limits_a^t{  \left| x(s)  - \overline{x}(s) \right|^2 ds} \\
	& = Q^2(t) \int\limits_a^t{  \left| x(s_1)  - \overline{x}(s_1) \right|^2 ds_1}
	\end{split} 
	\end{gather}
and hence
	\vspace*{-0.3truecm}
	\[ |(Vx)(t) - (V\overline{x})(t)|^2 \le   Q^2(t) \int\limits_a^b{  \left| x(s_1)  - \overline{x}(s_1) \right|^2 ds_1}.     \]
Then we have
	\vspace*{-0.1truecm}
	\begin{gather*}
	\begin{split}
	 \int\limits_a^b{\left| (Vx)(t) - (V\overline{x})(t) \right|^2 dt}  
	& \le  \int\limits_a^b{Q^2(t)dt} \int\limits_a^b{  \left| x(t)  - \overline{x}(t) \right|^2 dt} \\
	& \le M^2 \int\limits_a^b{  \left| x(t)  - \overline{x}(t) \right|^2 dt}.
	\end{split} 
	\end{gather*}
This implies that
	\[ \int\limits_a^b{\left| \int\limits_a^t {[ K_1(t,s,x(s))  - K_1(t,s,\overline{x}(s)) ] ds} \right|^2 dt } - M^2 \int\limits_a^b{  | x(t)  - \overline{x}(t) |^2 dt} \le  0\]
	for all $x(t), \overline{x}(t) \in C^{\nu}[a,b]$. We may assume without loss of generality that
	\begin{equation}\label{eq:3.8}
	\int\limits_a^b{\left| \int\limits_a^t {[ K_1(t,s,x(s))  - K_1(t,s,\overline{x}(s))] ds} \right|^2 dt } - M^2 \int\limits_a^b{  | x(t)  - \overline{x}(t) |^2 ds}< 0
	\end{equation} 
	for all $x(t), \overline{x}(t) \in C^{\nu}[a,b]$ with $x(t)\ne  \overline{x}(t)$.\\
	\indent By taking equidistant partition $\mathsf{\Pi}$, as above with $h = t_{i+1} - t_i, \; i = 0, 1, \ldots, n-1$ and  also the known weights $w_i , w_e, \; i, e = 0, 1, \ldots, n$ for interval $[a,b]$,  we have
	\begin{gather}\label{eq:3.9}
	\begin{split}
	& \sum\limits_{i = 0}^{n }{w_i \left| \int\limits_a^{t_i} {\left[ K_1(t_i,s,x(s))  - K_1(t_i,s,\overline{x}(s)) \right] ds} \right|^2 } + O(h^{\nu_1}) \\
	& - M^2 \sum\limits_{e = 0}^{n }{w_e |x_e - \overline{x}_e|^2 } 
	-  O(h^{\nu_1}) < 0,  
	\end{split}    
	\end{gather}
	where $x_e = x(t_e), \overline{x}_e = \overline{x}(t_e), \; e = 0, 1, \ldots, n$ and  $2 \le \nu_1 \le  \nu$ depend upon the used method of Newton--Cotes for estimating the integrals in \eqref{eq:3.8}. From \eqref{eq:3.9}, we have
	\begin{gather}\label{eq:3.10}
      \begin{split} 
	& \sum\limits_{i = 0}^{n }{w_i \left| \int\limits_a^{t_i} {\left[ K_1(t_i,s,x(s))  - K_1(t_i,s,\overline{x}(s)) \right] ds} \right|^2 }   \\
	& - M^2 \sum\limits_{e = 0}^{n }{w_e |x_e - \overline{x}_e|^2 } 
	+  O(h^{\nu_1}) < 0.
		\end{split}    
	\end{gather}
	 Therefore, by taking equidistant partition $\mathsf{\Pi}$, as above with $h = s_{i+1} - s_i,\; i = 0, 1, \ldots, n-1$ and also the known weights $w_{i_j}, \; j = 0, 1, \ldots, i$ for interval $[a,t_i]$, the inequality \eqref{eq:3.10} can be written as
	\begin{align*} 
	& \sum\limits_{i = 0}^{n}{w_i  \left|{ \sum\limits_{j = 0}^{i }{w_{i_j} [ K_1(t_i,s_j,x_j) - K_1(t_i,s_j, \overline{x}_j) ]} + O(h^{\nu_1})  } \right|^2 } \\
	& - M^2 \sum\limits_{e = 0}^{n }{w_e |x_e - \overline{x}_e|^2 } 
	+  O(h^{\nu_1}) < 0, 
	\end{align*}
	where $x_j = x(s_j),  \overline{x}_j = \overline{x}(s_j), j = 0, 1, \ldots, i$ and $2 \le \nu_1 \le \nu$  depend upon the used method of Newton--Cotes for estimating  the integral in \eqref{eq:3.10}. Hence, for sufficiently large $n$, we have
	\begin{equation*} 
	\sum\limits_{i = 0}^{n}{w_i  \left|{ \sum\limits_{j = 0}^{i }{w_{i_j} [ K_1(t_i,s_j,\xi_j) - K_1(t_i,s_j, \overline{\xi}_j) ]}  } \right|^2 }  - M^2 \sum\limits_{e = 0}^{n }{w_e |\xi_e - \overline{\xi}_e|^2 } 
	\le 0
	\end{equation*}
	for all $\xi, \overline{\xi} \in \mathbb{R}^{n+1}$.	That means
	\[\| \Phi (\xi)  - \Phi (\overline{\xi}) \|^2 - M^2 \|\xi - \overline{\xi} \|^2 \le 0, \;  \forall \xi, \overline{\xi} \in \mathbb{R}^{n+1}.\]
Hence
	\[\| \Phi (\xi)  - \Phi (\overline{\xi}) \| \le M \|\xi - \overline{\xi} \|, \; \forall \xi, \overline{\xi} \in \mathbb{R}^{n+1}. \]
	Consequently,  \eqref{eq:3.6} is hold for $m=1$. \\
	\indent  We now prove that \eqref{eq:3.6} is hold for $m=2$.	From \eqref{eq:3.7}, we have
	\begin{gather}\label{eq:3.12} 
	\begin{split} 
	|(V^2x)(t) - (V^2\overline{x})(t)|^2 
	& \le Q^2(t)  \int\limits_a^t {|  (Vx)(s_1)  - (V\overline{x})(s_1) |^2 ds_1}   \\
	& \le  Q^2(t) \int\limits_a^t{Q^2(s_1)ds_1} \int\limits_a^{s_1}{  | x(s_2)  - \overline{x}(s_2) |^2 ds_2}
		\end{split}    
	\end{gather}
	and hence 
	\[|(V^2x)(t) - (V^2\overline{x})(t)|^2    \le Q^2(t) \int\limits_a^t{Q^2(s_1)ds_1} \int\limits_a^b{  | x(s_2)  - \overline{x}(s_2) |^2 ds_2}. \]
	Then we have
	\begin{align*}
	 \int\limits_a^b{|(V^2x)(t) - (V^2\overline{x})(t)|^2 dt}  &  \le \int\limits_a^b{Q^2(t) dt}   \int\limits_a^b{Q^2(s_1)ds_1} \int\limits_a^b{  | x(t)  - \overline{x}(t) |^2 dt} \\
	& \le M^4 \int\limits_a^b{| x(t)  - \overline{x}(t) |^2 dt}.
	\end{align*} 
	It follows that
	\begin{align*}
	& \int\limits_a^b{\left| \int\limits_a^t {[ K_1(t,s,(Vx)(s))  - K_1(t,s,(V\overline{x})(s)) ] ds} \right|^2 dt }\\
	&  - M^4 \int\limits_a^b{  | x(t)  - \overline{x}(t)|^2 dt} \le 0
	\end{align*} 
	for all $x(t), \overline{x}(t) \in C^{\nu}[a,b]$.
	We may assume without loss of generality that 
	\begin{gather}\label{eq:3.13}
	 \begin{split} 
	& \int\limits_a^b{\left| \int\limits_a^t {[ K_1(t,s,(Vx)(s))  - K_1(t,s,(V\overline{x})(s)) ] ds} \right|^2 dt } \\
	& - M^4 \int\limits_a^b{  | x(t)  - \overline{x}(t) |^2 dt} < 0
	 \end{split} 
	\end{gather}
	for all $x(t), \overline{x}(t) \in C^{\nu}[a,b]$ with  $x(t) \ne \overline{x}(t)$.\\
	\indent By taking equidistant partition $\mathsf{\Pi}$, as above with $h = t_{i+1} - t_i,\; i = 0, 1, \ldots, n-1$ and  also the known weights $w_i , w_e, \; i, e = 0, 1, \ldots, n$ for interval $[a,b]$,  we have
	\begin{gather}\label{eq:3.14}
	\begin{split}
	& \sum\limits_{i = 0}^{n }{w_i \left| \int\limits_a^{t_i} {\left[ K_1(t_i,s,(Vx)(s))  - K_1(t_i,s,(V\overline{x})(s)) \right] ds} \right|^2 } + O(h^{\nu_1}) \\
	& - M^4 \sum\limits_{e = 0}^{n }{w_e |x_e - \overline{x}_e|^2 } 
	-  O(h^{\nu_1}) < 0,  
	\end{split}    
	\end{gather}
	where $x_e = x(t_e), \overline{x}_e = \overline{x}(t_e), \; e = 0, 1, \ldots, n$ and  $2 \le \nu_1 \le \nu$ depend upon the used method of Newton--Cotes for estimating  the integrals in \eqref{eq:3.13}.
	From \eqref{eq:3.14}, we have
	\begin{gather}\label{eq:3.15}
	\begin{split}
	& \sum\limits_{i = 0}^{n }{w_i \left| \int\limits_a^{t_i} {\left[ K_1(t_i,s,(Vx)(s))  - K_1(t_i,s,(V\overline{x})(s)) \right] ds} \right|^2 }   \\
	& - M^4 \sum\limits_{e = 0}^{n }{w_e |x_e - \overline{x}_e|^2 }  +  O(h^{\nu_1}) < 0.  
	\end{split}    
	\end{gather}
	Therefore, by taking equidistant partition $\mathsf{\Pi}$, as above with $h = s_{i+1} - s_i, \;  i = 0, 1, \ldots, n-1$ and also the known weights $w_{i_j}, \; j = 0, 1, \ldots, i$ for interval $[a,t_i]$, the inequality \eqref{eq:3.15} can be written as
	\begin{gather} \label{eq:3.16}
	\begin{split}
	  &  \sum\limits_{i = 0}^{n}{w_i \left|{ \sum\limits_{j = 0}^{i }{ w_{i_j} [ K_1 (t_i,s_j,\int\limits_a^{s_j}{K_1(s_j, \varsigma, x(\varsigma))  d\varsigma} ) } } \right.}  \\
	& \;\;\;\;\;\;\;\;\;\;\;\;\;\;\;\;\;\;\;\;\;\;\;\; \left.{ - K_1 (t_i,s_j, \int\limits_a^{s_j}{K_1(s_j, \varsigma, \overline{x}(\varsigma) ) d\varsigma} ) ] +  O(h^{\nu_1})  } \right|^2  \\
	& - M^2 \sum\limits_{e = 0}^{n }{w_e |x_e - \overline{x}_e|^2 } 
	+  O(h^{\nu_1}) < 0, 
	\end{split} 
	\end{gather}
where	$2 \le \nu_1 \le \nu$ depend upon the used method of Newton--Cotes for estimating the integrals in \eqref{eq:3.15}. 
 By taking equidistant partition $\mathsf{\Pi}$, as above with $h = \varsigma_{i+1} - \varsigma_i, \; i = 0, 1, \ldots, n-1$ and also the known weights $ w_{j_{\rho}}, w_{j_{\rho'}}, \; \rho, \rho' = 0, 1, \ldots, j$ for interval $[a,s_j]$, the inequality \eqref{eq:3.16} can be written as
 	\begin{align*} 
	& \sum\limits_{i = 0}^n{ w_i  \left|{ \sum\limits_{j = 0}^i{ w_{i_j} [ K_1 (t_i,s_j,\sum\limits_{\rho = 0}^j {w_{j_{\rho}}K_1(s_j, \varsigma_{\rho}, x_{\rho}) } +  O(h^{\nu_1}) )  }   }\right. }    \\
	& \;\;\;\;\;\;\;\;\;\;\;\;\;\;\;\;\;\;\;\;\;\;\;\; \left.{  - K_1 (t_i,s_j, \sum\limits_{\rho' = 0}^{j }{w_{j_{\rho'}}K_1(s_j, \varsigma_{\rho'}, \overline{x}_{\rho'}) +  O(h^{\nu_1}) } ) ] + O(h^{\nu_1})  } \right|^2  \\
	&  - M^2 \sum\limits_{e = 0}^{n }{w_e |x_e - \overline{x}_e|^2 } +  O(h^{\nu_1}) < 0. 
	\end{align*}
	where $x_{\rho} = x(\varsigma_{\rho}), \overline{x}_{\rho'} = \overline{x}(\varsigma_{\rho'}),\, \rho, \rho' = 0, 1, \ldots, j$ and $2 \le \nu_1 \le \nu$ depend upon the used method of Newton--Cotes for estimating the integrals in \eqref{eq:3.16}. Therefore, for sufficiently large $n$,  we have
	\begin{align*} 
	& \sum\limits_{i = 0}^n{ w_i  \left|{ \sum\limits_{j = 0}^i{ w_{i_j} [ K_1 (t_i,s_j,\sum\limits_{\rho = 0}^j {w_{j_{\rho}}K_1(s_j, \varsigma_{\rho}, \xi_{\rho}) }  )}  }\right. }    \\
	&\;\;\;\;\;\;\;\;\;\;\; \left.{  - K_1 (t_i,s_j, \sum\limits_{\rho' = 0}^{j }{w_{j_{\rho'}}K_1(s_j, \varsigma_{\rho'}, \overline{\xi}_{\rho'})    } ) ]    }\right|^2  
   - M^2 \sum\limits_{e = 0}^{n }{w_e |\xi_e - \overline{\xi}_e|^2 }  \le 0
	\end{align*}
	for all $\xi, \overline{\xi} \in \mathbb{R}^{n+1}$. That means
	\[\| \Phi^2 (\xi)  - \Phi^2 (\overline{\xi}) \|^2 - M^4 \|\xi - \overline{\xi} \|^2 \le  0, \; \forall \xi, \overline{\xi} \in \mathbb{R}^{n+1}.\] 
Thus
	\[\| \Phi^2 (\xi)  - \Phi^2 (\overline{\xi}) \| \le M^2 \|\xi - \overline{\xi} \|, \; \forall \xi, \overline{\xi} \in \mathbb{R}^{n+1}. \]
Consequently,  \eqref{eq:3.6} is hold for $m=2$. \\
	\indent Next, we shall prove that \eqref{eq:3.6} is hold for some integer $m \ge 3$. Continuing the process from  \eqref{eq:3.7} and   \eqref{eq:3.12}, we obtain
		\vspace*{-0.2truecm}
	\begin{gather*}
	\begin{split}
	& \!{|(V^{m}x)(t) - (V^{m}\overline{x})(t) |^2 \le Q^2(t)  \int\limits_a^t {|  (V^{m-1} x)(s_1)  - (V^{m-1} \overline{x})(s_1) |^2 ds_1}  \le Q^2(t) }\\
	& \!{ \times  \int\limits_a^t{Q^2(s_1)ds_1} \int\limits_a^{s_1}{Q^2(s_2)ds_2}
	\cdots  \int\limits_a^{s_{m-2}}{Q^2(s_{m-1})ds_{m-1}} \int\limits_a^{s_{m-1}}{ |x(s_{m})  - \overline{x}(s_{m}) |^2 ds_{m}} }
	\end{split} 
	\end{gather*}
	and hence
		\vspace*{-0.1truecm}
	\begin{gather}\label{eq:3.17}
	\begin{split}
	& |(V^{m}x)(t) - (V^{m}\overline{x})(t) |^2 \le Q^2(t) \\
	& \times \!{\int\limits_a^t{Q^2(s_1)ds_1} \int\limits_a^{s_1}{Q^2(s_2)ds_2}  
	\cdots \int\limits_a^{s_{m-2}}{Q^2(s_{m-1}) ds_{m-1}} \int\limits_a^b{| x(s_m)  - \overline{x}(s_m) |^2 ds_m} }.
	\end{split} 
	\end{gather}
 By induction, we can show that
 	\vspace*{-0.3truecm}
\begin{gather}\label{eq:3.18}
	\begin{split}
	& \int\limits_a^t{Q^2(s_1)ds_1} \int\limits_a^{s_1}{Q^2(s_2)ds_2} \cdots \int\limits_a^{s_{m-2}}{Q^2(s_{m-1}) ds_{m-1}} \\
	& = \frac{1}{(m-1)!}  \left( \int\limits_a^t{Q^2(s)ds} \right)^{m-1}.
		\end{split} 
	\end{gather}
	Combining now \eqref{eq:3.17} and \eqref{eq:3.18}, we get
	\begin{gather*}
	\begin{split}
	& | (V^{m}x)(t) - (V^{m}\overline{x})(t) |^2    \\
	& \le Q^2(t)\frac{1}{(m-1)!} \left( \int\limits_a^t{Q^2(s)ds}\right)^{m-1} \int\limits_a^b{  | x(s_m)  - \overline{x}(s_m) |^2 ds_m} \\
	& \le Q^2(t) \frac{1}{(m-1)!} \left( \int\limits_a^b{Q^2(s)ds}\right)^{m-1}  \int\limits_a^b{| x(s_m)  - \overline{x}(s_m)|^2 ds_m}\\
	& 	\le Q^2(t)  \frac{M^{2(m-1)}}{(m-1)!} \int\limits_a^b{ | x(s_m)  - \overline{x}(s_m) |^2 ds_m}.
	\end{split} 
	\end{gather*}
	Hence
\[	 \int\limits_a^b{\left| (V^{m}x)(t) - (V^{m}\overline{x})(t) \right|^2 dt}  
	 \le \frac{M^{2(m-1)}}{(m-1)!}  \int\limits_a^b{ Q^2(t)dt  }  \int\limits_a^b{  \left| x(t)  - \overline{x}(t) \right|^2 dt}  \]
	 \newpage
	\[ \le \frac{M^{2m}}{(m-1)!} \int\limits_a^b{  \left| x(t)  - \overline{x}(t) \right|^2 dt}\]
		for all $x(t), \overline{x}(t) \in C^{\nu}[a,b]$.
	In a similar way as above, we can show that
	\[\| \Phi^m (\xi)  - \Phi^m (\overline{\xi}) \|^2  - \frac{M^{2m}}{(m-1)!}  \|\xi - \overline{\xi} \|^2 \le 0, \; 
	\forall \xi, \overline{\xi} \in \mathbb{R}^{n+1}. \]
	This implies that
	\[\| \Phi^m (\xi)  - \Phi^m (\overline{\xi}) \| \le \frac{M^m}{ \sqrt{(m-1)!} } \|\xi - \overline{\xi} \|, \; \forall \xi, \overline{\xi} \in \mathbb{R}^{n+1}.  \]
	Consequently, \eqref{eq:3.6} is hold for some  integer $m \ge 3$. This completes the proof of the proposition.
\end{proof} 
\indent We now give some properties of the mapping $F$ in the following proposition.
\begin{proposition}\label{pro2.2}
	Let the assumptions {\rm (iv)} and {\rm (v)}  be satisfied. Then 
	\begin{equation}\label{eq:3.19}
	\| F (\xi)  - F (\overline{\xi}) \| \le L \|\xi - \overline{\xi} \|, \; \forall \xi, \overline{\xi} \in \mathbb{R}^{n+1}
	\end{equation}	
	and
	\begin{equation}\label{eq:3.20}
	\left\langle F(\xi) - F(\overline{\xi}), \xi -\overline{\xi} \right\rangle >0,  \; \forall \xi, \overline{\xi} \in \mathbb{R}^{n+1}, \; \xi \ne \overline{\xi}.
	\end{equation}
\end{proposition}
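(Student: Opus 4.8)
The plan is to establish the two inequalities separately, closely following the pattern of Proposition \ref{pro2.1}.

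For \eqref{eq:3.19} I would introduce the operator
\[
(Wx)(t) = \int\limits_a^b K_2(t,s,x(s))\,ds, \qquad x(t) \in L^2[a,b],
\]
which plays the role that $V$ played in the previous proof. From assumption {\rm (iv)} and the Cauchy--Schwarz inequality,
\[
|(Wx)(t) - (W\overline{x})(t)|^2 \le \int\limits_a^b |\phi(t,s)|^2\,ds \int\limits_a^b |x(s)-\overline{x}(s)|^2\,ds,
\]
so integrating in $t$ and using $\int_a^b \int_a^b |\phi(t,s)|^2\,ds\,dt = L^2$ gives $\int_a^b |(Wx)(t)-(W\overline{x})(t)|^2\,dt \le L^2 \int_a^b |x(t)-\overline{x}(t)|^2\,dt$. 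Since {\rm (iv)} already supplies a global $L^2$ bound on $\phi$, no intermediate majorant such as $Q(t)$ is needed here, so this is a simplified copy of the $m=1$ step of Proposition \ref{pro2.1}. Weakening this to a strict inequality for $x(t) \ne \overline{x}(t)$ and then passing to the partition $\mathsf{\Pi}$ with the Newton--Cotes weights $w_i$ (outer variable) and $w_r$ (inner variable), while absorbing the $O(h^{\nu_1})$ quadrature errors for $n$ large exactly as before, discretizes the estimate to $\|F(\xi)-F(\overline{\xi})\|^2 \le L^2\|\xi-\overline{\xi}\|^2$, which is \eqref{eq:3.19}. (Alternatively, \eqref{eq:3.19} follows at once by applying the weighted Cauchy--Schwarz inequality directly to $f_i(\xi)-f_i(\overline{\xi}) = \sum_{r=0}^n w_r[K_2(t_i,s_r,\xi_r) - K_2(t_i,s_r,\overline{\xi}_r)]$ and summing over $i$, since $\sum_{i}\sum_{r} w_i w_r |\phi(t_i,s_r)|^2$ is a quadrature approximation of $L^2$.)

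For \eqref{eq:3.20} I would argue directly from assumption {\rm (v)}. Given $\xi, \overline{\xi} \in \mathbb{R}^{n+1}$ with $\xi \ne \overline{\xi}$, choose $x(t), \overline{x}(t) \in C^{\nu}[a,b]$ interpolating the data, $x(t_i) = \xi_i$, $\overline{x}(t_i) = \overline{\xi}_i$, $i = 0,1,\ldots,n$ (e.g.\ interpolating polynomials); then $x(t) \ne \overline{x}(t)$, so {\rm (v)} yields
\[
\int\limits_a^b \Bigl\{ \int\limits_a^b [K_2(t,s,x(s)) - K_2(t,s,\overline{x}(s))]\,ds \Bigr\}[x(t)-\overline{x}(t)]\,dt > 0.
\]
Applying a Newton--Cotes rule in $s$ with weights $w_r$ and then in $t$ with weights $w_i$, and using $x(t_i)=\xi_i$, $\overline{x}(t_i)=\overline{\xi}_i$, the left-hand side equals
\[
\sum\limits_{i=0}^n w_i \Bigl[ \sum\limits_{r=0}^n w_r \bigl(K_2(t_i,s_r,\xi_r) - K_2(t_i,s_r,\overline{\xi}_r)\bigr)\Bigr](\xi_i-\overline{\xi}_i) + O(h^{\nu_1}) = \langle F(\xi) - F(\overline{\xi}),\, \xi-\overline{\xi}\rangle + O(h^{\nu_1}),
\]
so for $n$ sufficiently large the pairing $\langle F(\xi) - F(\overline{\xi}),\xi-\overline{\xi}\rangle$ remains strictly positive, giving \eqref{eq:3.20}.

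The step I expect to be the main obstacle is retaining strictness in \eqref{eq:3.20}. For \eqref{eq:3.19} one may harmlessly weaken a strict continuous inequality to a non-strict discrete one, so the $O(h^{\nu_1})$ terms cause no difficulty; but in \eqref{eq:3.20} the discrete pairing must stay strictly positive, which requires the quadrature error to be dominated by the (a priori possibly small) positive value of the functional in {\rm (v)}, a value depending on $\xi$ and $\overline{\xi}$. Hence ``for $n$ sufficiently large'' should be read as the standing assumption that the partition is fine enough for the quadrature to inherit the sign --- the convention already used tacitly in Proposition \ref{pro2.1}. A fully rigorous version would call for a quantitative strengthening of {\rm (v)}, say $\langle\cdots\rangle \ge c\,\|x-\overline{x}\|^2$ for some constant $c > 0$, combined with a matching quadrature error bound; I would either impose such a condition or present \eqref{eq:3.20} with the same informal understanding as the rest of Section \ref{sec:3}.
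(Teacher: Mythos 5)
Your proposal follows essentially the same route as the paper's proof: Cauchy--Schwarz applied to the continuous operator $\int_a^b K_2(t,s,\cdot)\,ds$ followed by Newton--Cotes discretization and absorption of the $O(h^{\nu_1})$ terms for \eqref{eq:3.19}, and direct discretization of the monotonicity integral in assumption (v) for \eqref{eq:3.20}. The obstacle you flag --- that strict positivity in \eqref{eq:3.20} survives discretization only under the informal ``sufficiently large $n$'' convention, since the quadrature error must be dominated by a positive quantity depending on $\xi$ and $\overline{\xi}$ --- is present in the paper's own argument in exactly the same form, so your proof is faithful to (and no less rigorous than) the original.
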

\begin{proof}
	From (iv),  we have
	\begin{gather*}
	\begin{split}
	\left| \int\limits_a^b {[ K_2(t,s,x(s))  - K_2(t,s,\overline{x}(s)) ] ds} \right|  & \le  \int\limits_a^b{ | K_2(t,s,x(s))  - K_2(t,s,\overline{x}(s)) | ds}\\
	& \le   \int\limits_a^b{ |\phi(t,s) |  | x(s)  - \overline{x}(s) | ds}
	\end{split} 
	\end{gather*}
	for all $x(t), \overline{x}(t) \in C^{\nu}[a,b]$. 
	From this and Cauchy--Schwarz inequality, we obtain
	\begin{gather*}
	\begin{split}
	 &\int\limits_a^b{\left| \int\limits_a^b {[ K_2(t,s,x(s))  - K_2(t,s,\overline{x}(s)) ] ds} \right|^2dt}\\
	 &  \le  \int\limits_a^b \int\limits_a^b {|\phi(t,s) |^2dsdt} \int\limits_a^b{| x(s)  - \overline{x}(s) |^2 ds}  =  L^2 \int\limits_a^b{| x(t)  - \overline{x}(t) |^2 dt}
	\end{split} 
	\end{gather*}
and hence
	\[\int\limits_a^b{\left| \int\limits_a^b {\left[ K_2(t,s,x(s))  - K_2(t,s,\overline{x}(s)) \right] ds} \right|^2dt}  - L^2 \int\limits_a^b{\left| x(t)  - \overline{x}(t) \right|^2 dt} \le 0. \]
	We may assume without loss of generality that
	\begin{equation}\label{eq:3.21}
	\int\limits_a^b{\left| \int\limits_a^b {\left[ K_2(t,s,x(s))  - K_2(t,s,\overline{x}(s)) \right] ds} \right|^2dt}  -  L^2 \int\limits_a^b{\left| x(t)  - \overline{x}(t) \right|^2 dt} < 0
	\end{equation}
	for all $x(t), \overline{x}(t) \in C^{\nu}[a,b]$ with $x(t) \ne \overline{x}(t)$.\\
\indent By taking equidistant partition $\mathsf{\Pi}$, as above with $h = t_{i+1} - t_i,\; i = 0, 1, \ldots, n-1$ and also the known weights $w_i, w_e, \; i, e = 0, 1, \ldots, n$ for interval $[a,b]$,  we have
	\begin{gather}\label{eq:3.22}
		\begin{split}
	& \sum\limits_{i = 0}^{n }{w_i \left| \int\limits_a^b {\left[ K_2(t_i,s,x(s))  - K_2(t_i,s,\overline{x}(s)) \right] ds} \right|^2 } + O(h^{\nu_1}) \\
	& - L^2 \sum\limits_{e = 0}^{n }{w_e |x_e - \overline{x}_e|^2 }   -  O(h^{\nu_1}) < 0,  
	\end{split} 
	\end{gather}
	where $x_e = x(t_e), \overline{x}_e = \overline{x}(t_e),\; e = 0, 1, \ldots, n$ and  $2 \le \nu_1 \le \nu$ depend upon the used method of Newton--Cotes for estimating the integrals in \eqref{eq:3.21}.   From \eqref{eq:3.22}, we have
	\begin{gather}\label{eq:3.23}
   \begin{split}
	& \sum\limits_{i = 0}^{n }{w_i \left| \int\limits_a^b {\left[ K_2(t_i,s,x(s))  - K_2(t_i,s,\overline{x}(s)) \right] ds} \right|^2 } \\
	& - L^2 \sum\limits_{e = 0}^{n }{w_e |x_e - \overline{x}_e|^2 } + O(h^{\nu_1}) < 0.
	\end{split} 
  \end{gather}
 Therefore, by taking equidistant partition $\mathsf{\Pi}$, as above with $h = s_{i+1} - s_i,\; i = 0, 1, \ldots, n-1$ and also the known weights $w_r, \; r = 0, 1, \ldots, n$ for interval $[a,b]$, the inequality \eqref{eq:3.23} can be written as
	\begin{gather*}
\begin{split}
	& \sum\limits_{i = 0}^{n}{w_i  \left|\sum\limits_{r = 0}^{n }{w_r \left[ K_2(t_i,s_r,x_r) - K_2(t_i,s_r,\overline{x}_r)\right]} + O(h^{\nu_1})   \right|^2 }  \\
	& - L^2 \sum\limits_{e = 0}^{n }{w_e |x_e - \overline{x}_e|^2 } 
	 + O(h^{\nu_1}) < 0, 
	\end{split} 
\end{gather*}
	where $x_r = x(s_r), \overline{x}_r = \overline{x}(s_r),\; r = 0, 1, \ldots, n$ and $2 \le \nu_1 \le \nu$ depend upon the used method of Newton--Cotes for estimating  the integral in \eqref{eq:3.23}. Hence, for sufficiently large $n$,  we have
	\[\sum\limits_{i = 0}^{n}{w_i  \left|\sum\limits_{r = 0}^{n }{w_r \left[ K_2(t_i,s_r,\xi_r) - K_2(t_i,s_r,\overline{\xi}_r)\right]} \right|^2 }\\
	- L^2 \sum\limits_{e = 0}^{n }{w_e |\xi_e - \overline{\xi}_e|^2 } \le 0   \]
	 for all $\xi, \overline{\xi} \in \mathbb{R}^{n+1}$. That means
	\[\| F (\xi)  - F (\overline{\xi}) \|^2  - L^2  \|\xi - \overline{\xi} \|^2 \le 0, \; \forall \xi, \overline{\xi} \in \mathbb{R}^{n+1}.\]
	 Hence
	\[\| F (\xi)  - F (\overline{\xi}) \| \le L \|\xi - \overline{\xi} \|, \forall \xi, \overline{\xi} \in \mathbb{R}^{n+1}, \]
	which proves \eqref{eq:3.19}.\\
	\indent Let us prove \eqref{eq:3.20}. From  (v), by taking equidistant partition $\mathsf{\Pi}$, as above with $h = t_{i+1} - t_i,\; i = 0, 1, \ldots, n-1$,  and
	also the known weights $w_i,\; i = 0, 1, \ldots, n$ for interval $[a,b]$, we obtain
	\begin{equation}\label{eq:3.24}
	\sum\limits_{i = 0}^{n }{w_i \int\limits_a^b { \left[ K_2(t_i,s,x(s)) - K_2(t_i,s,\overline{x}(s))\right] ds } \;  [x_i - \overline{x}_i ] } + O(h^{\nu_1}) > 0,
	\end{equation}
	where $x_i = x(t_i), \overline{x}_i = \overline{x}(t_i),\;  i = 0, 1, \ldots, n$ and $2 \le \nu_1 \le \nu$ depend upon the used method of Newton--Cotes for estimating the integral.   
	Therefore, by taking equidistant partition $\mathsf{\Pi}$, as above with $h = s_{i+1} - s_i, \; i = 0, 1, \ldots, n-1$ and also the known weights $w_r, \; r = 0, 1, \ldots, n$ for interval $[a,b]$, the inequality \eqref{eq:3.24} can be written as
	\begin{equation*} 
	 \sum\limits_{i = 0}^{n }{w_i \left\{ \sum\limits_{r = 0}^{n }{ w_r [ K_2(t_i,s_r,x_r) \!-\! K_2(t_i,s_r,\overline{x}_r)]} \!+\! O(h^{\nu_1}) \right\}[x_i \!-\! \overline{x}_i ] } 
	 + O(h^{\nu_1}) > 0, 
	\end{equation*}
	where $x_r = x(s_r), \overline{x}_r = \overline{x}(s_r), \; r = 0, 1, \ldots, n$ and $2 \le \nu_1 \le \nu$ depend upon the used method of Newton--Cotes for estimating the integral in \eqref{eq:3.24}. Hence, for sufficiently large $n$, we have
	\[  \sum\limits_{i = 0}^{n }{w_i  \left\{  \sum\limits_{r = 0}^{n }{ w_r \left[ K_2(t_i,s_r,\xi_r) - K_2(t_i,s_r,\overline{\xi}_r)\right]} \right\}   [\xi_i - \overline{\xi}_i ] } >0 \]
	for all $\xi, \overline{\xi} \in \mathbb{R}^{n+1}$ with $\xi \ne \overline{\xi}$. That means
	\[\left\langle F(\xi) - F(\overline{\xi}), \xi -\overline{\xi} \right\rangle > 0,\;  \forall \xi, \overline{\xi} \in \mathbb{R}^{n+1}, \; \xi \ne \overline{\xi}.\]
	 This completes the proof of the proposition. 
\end{proof} 
In order to prove our main results, we need the following theorems.
\begin{theorem}\label{thm3.1}
	Assume $H$ is a nonempty closed set in a Banach space $X$ and $T: H \to H$ is continuous. Suppose that $T^m$ is a contractive operator for some positive integer $m$. Then $T$ has a unique fixed point $x^*$ in $H$. Moreover, the iteration process
	\begin{equation}\label{eq:3.25}
	x_{k+1} = T (x_k), \;\;\; k = 0, 1, 2, \ldots
	\end{equation}
	converges to the fixed point $x^*$.
\end{theorem}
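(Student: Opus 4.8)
The plan is to reduce the statement to the classical Banach contraction principle applied to the map $T^m$. First I would observe that, since $H$ is a closed subset of the Banach space $X$, it is itself a complete metric space under the induced metric; hence the Banach fixed point theorem applies to the contraction $T^m : H \to H$, producing a unique point $x^* \in H$ with $T^m(x^*) = x^*$.

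Next I would promote $x^*$ to a fixed point of $T$. Applying $T$ to the identity $T^m(x^*) = x^*$ and using the commutation $T \circ T^m = T^m \circ T$, we obtain $T^m\bigl(T(x^*)\bigr) = T(x^*)$, so $T(x^*)$ is also a fixed point of $T^m$; by the uniqueness just established, $T(x^*) = x^*$. Uniqueness of the fixed point of $T$ is then immediate, since every fixed point of $T$ is automatically a fixed point of $T^m$ and therefore equals $x^*$.

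For the convergence of the iteration \eqref{eq:3.25}, I would write each index by Euclidean division as $k = mj + i$ with $0 \le i \le m-1$. Denoting by $q \in [0,1)$ the contraction coefficient of $T^m$, one has $x_k = (T^m)^j(x_i)$ with $x_i = T^i(x_0)$, and since $x^* = (T^m)^j(x^*)$ it follows that $\|x_k - x^*\| = \|(T^m)^j(x_i) - (T^m)^j(x^*)\| \le q^j \|x_i - x^*\| \le q^j \max_{0 \le i \le m-1} \|x_i - x^*\|$. The maximum is a finite constant depending only on the first $m$ iterates, and $j \to \infty$ as $k \to \infty$, so the right-hand side tends to $0$ and $x_k \to x^*$.

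I do not expect a genuine obstacle here; the only point that needs a little care is passing from convergence of the $m$ subsequences $\{x_{mj+i}\}_j$ to convergence of the full sequence $\{x_k\}$, which is handled precisely by the uniform bound over the finitely many residues $i$. Note also that the continuity hypothesis on $T$ is not actually used in this argument, since $T \circ T^m = T^m \circ T$ holds for any self-map; it is simply harmless to assume it.
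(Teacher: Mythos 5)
Your proof is correct and complete; the paper itself offers no argument for this theorem (it simply defers to the cited references), and yours is exactly the standard proof found there: Banach's principle applied to the contraction $T^m$ on the complete set $H$, promotion of its fixed point to a fixed point of $T$ via $T\circ T^m=T^m\circ T$, and convergence of the full orbit through the uniform bound over the $m$ residue classes. Your side remark that the continuity hypothesis on $T$ is never actually used is also accurate.
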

\begin{proof}
	For proof see \cite{RefBAtki} or \cite{RefJMalae}.	 
\end{proof}
\begin{theorem}\label{thm3.2}
	Let the assumptions of Theorem \ref{thm3.1} be satisfied and let $\{x_k\},\; k = 1, 2, \ldots$ be constructed by iteration process \eqref{eq:3.25}. Then for $k \ge m$, the following estimates hold
	\begin{equation}\label{eq:3.26}
	\| x_k-x^* \| \le \frac{\alpha^{\frac{k-h_0}{m}}}{1-\alpha} \| x_{m+h_0}-x_{h_0} \|,
	\end{equation}
	where  $\alpha$ is the contraction coefficient of the operator $T^m,  h_0 \in \left\lbrace 0, 1, \ldots, m-1 \right\rbrace$ is the residual of $\frac{k}{m}$.
\end{theorem}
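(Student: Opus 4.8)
The plan is to reduce the estimate to the classical a priori error bound for the Picard iteration of the contraction $S := T^m$. Write $k = qm + h_0$ with $0 \le h_0 \le m-1$ and $q$ a nonnegative integer; the hypothesis $k \ge m$ together with $h_0 \le m-1$ forces $qm = k - h_0 \ge 1$, so in fact $q \ge 1$, and $q = \frac{k-h_0}{m}$. By Theorem \ref{thm3.1}, $T$ has a unique fixed point $x^*$; since $Tx^* = x^*$ implies $Sx^* = T^m x^* = x^*$, the point $x^*$ is also the (necessarily unique) fixed point of $S$, and $S$ is by assumption a contraction on $H$ with coefficient $\alpha \in [0,1)$.

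Next I would isolate the subsequence $\bigl(x_{h_0 + jm}\bigr)_{j \ge 0}$ of the iterates produced by \eqref{eq:3.25}. Because $x_{\ell+1} = T(x_\ell)$ for every $\ell$, iterating $m$ consecutive steps gives $x_{h_0 + (j+1)m} = T^m\bigl(x_{h_0 + jm}\bigr) = S\bigl(x_{h_0 + jm}\bigr)$, so this subsequence is precisely the Picard iteration of $S$ started at $x_{h_0}$, i.e. $x_{h_0 + jm} = S^j(x_{h_0})$. In particular $x_k = x_{h_0 + qm} = S^q(x_{h_0})$.

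Then I would invoke the standard a priori estimate for the Picard iteration of a contraction: if $y_j = S^j(y_0)$, then $\|y_j - x^*\| \le \frac{\alpha^j}{1-\alpha}\,\|y_1 - y_0\|$ for all $j \ge 1$ (recovered, if one wants a self-contained argument, in two lines from $\|S^{i+1}y_0 - S^i y_0\| \le \alpha^i\|Sy_0 - y_0\|$, the triangle inequality, and summing the resulting geometric series). Applying this with $y_0 = x_{h_0}$, $y_1 = x_{m+h_0}$ and $j = q = \frac{k-h_0}{m} \ge 1$ yields
\[
\|x_k - x^*\| = \|S^q(x_{h_0}) - x^*\| \le \frac{\alpha^{q}}{1-\alpha}\,\|x_{m+h_0} - x_{h_0}\| = \frac{\alpha^{(k-h_0)/m}}{1-\alpha}\,\|x_{m+h_0} - x_{h_0}\|,
\]
which is exactly \eqref{eq:3.26}.

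The argument is essentially bookkeeping, and the only points requiring a moment's care are: (a) verifying $q \ge 1$, so that the a priori bound, which needs at least one iterate of $S$ to be available, actually applies for all $k \ge m$; and (b) confirming that the limit of the subsequence $\bigl(x_{h_0+jm}\bigr)_j$ coincides with the fixed point $x^*$ delivered by Theorem \ref{thm3.1} rather than with some other point — this is immediate since $x^*$ is the unique fixed point of $S$ and $H$ is closed. I do not anticipate any genuine obstacle beyond keeping the index $h_0$ and the exponent $(k-h_0)/m$ consistent throughout.
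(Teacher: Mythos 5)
Your proof is correct: the reduction to the Picard iteration of $S=T^m$ along the subsequence $x_{h_0+jm}=S^j(x_{h_0})$, combined with the standard a priori bound $\|S^q y_0 - x^*\|\le \frac{\alpha^q}{1-\alpha}\|Sy_0-y_0\|$ and the observation that $x^*$ is the unique fixed point of $S$, gives exactly \eqref{eq:3.26}. The paper itself supplies no proof here (it only cites Maleknejad--Torabi), and your argument is the standard one that reference carries out, so there is nothing further to compare.
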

\begin{proof}
	For proof see  \cite{RefJMalae}.	 
\end{proof} 
\indent Now, we shall give the existence and uniqueness of the solution of the perturbed system of nonlinear equations \eqref{eq:3.5}.
\begin{theorem}\label{thm3.3}
	Let the assumptions {\rm (i)--(v)}  be satisfied. Then the perturbed system of nonlinear equations \eqref{eq:3.5} has a unique solution for any $g \in \mathbb{R}^{n+1}$.
\end{theorem}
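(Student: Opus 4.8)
The plan is to recast \eqref{eq:3.5} as a second-kind equation $\xi+A(\xi)=g$ in the Hilbert space $(\mathbb{R}^{n+1},\langle\cdot,\cdot\rangle)$ with $A:=\Phi+F$, and to solve it by the hybrid announced in the introduction: the Fredholm part $F$ is handled by the machinery of Section~\ref{sec:2} because, by Proposition~\ref{pro2.2}, it is Lipschitz and strictly monotone, while the Volterra part $\Phi$ is handled by Theorem~\ref{thm3.1} because, by Proposition~\ref{pro2.1}, some iterate $\Phi^{m}$ has Lipschitz constant $M^{m}/\sqrt{(m-1)!}<1$ once $m$ is large. Concretely I would first apply Theorem~\ref{thm2.1} to the operator $F$ alone to obtain that $I+F$ is a bijection of $\mathbb{R}^{n+1}$; writing $R:=(I+F)^{-1}$, the monotonicity of $F$ (Definition~\ref{def1} with $\varepsilon=1$) shows that $R$ is nonexpansive. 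Equation \eqref{eq:3.5} is then equivalent to the fixed-point problem
\[
\xi=R\bigl(g-\Phi(\xi)\bigr)=:T(\xi),\qquad T:\mathbb{R}^{n+1}\to\mathbb{R}^{n+1}\ \text{continuous.}
\]

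The task reduces to verifying the hypotheses of Theorem~\ref{thm3.1} for $T$, i.e. that $T^{m}$ is a contraction for some positive integer $m$. Fixing $m$ with $M^{m}/\sqrt{(m-1)!}<1$ from Proposition~\ref{pro2.1}, I would either (a) estimate $\|T^{m}(\xi)-T^{m}(\overline\xi)\|$ directly, peeling off the nonexpansive copies of $R$ and using Proposition~\ref{pro2.1} to absorb the resulting composition of $\Phi$'s, or, if the interleaving of $R$ and $\Phi$ obstructs a clean bound, (b) run a parameter continuation on the one-parameter family $\xi+\Phi(\xi)+\varepsilon F(\xi)=g$, $0\le\varepsilon\le1$: the endpoint $\varepsilon=0$ is the equation $\xi+\Phi(\xi)=g$, uniquely solvable by Proposition~\ref{pro2.1} and Theorem~\ref{thm3.1}, and at a step $j\varepsilon_{0}\to(j{+}1)\varepsilon_{0}$ one rewrites the equation as $\xi=P_{j}\bigl(g-\varepsilon_{0}F(\xi)\bigr)$, where $P_{j}$ is the solution operator at parameter $j\varepsilon_{0}$, and shows this map has a contractive iterate. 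The uniform (in $j$) Lipschitz bound on $P_{j}$ that this needs comes from pairing the identity $(\xi_{1}-\xi_{2})+(\Phi\xi_{1}-\Phi\xi_{2})+j\varepsilon_{0}(F\xi_{1}-F\xi_{2})=y_{1}-y_{2}$ with $\xi_{1}-\xi_{2}$, discarding the nonnegative term $j\varepsilon_{0}\langle F\xi_{1}-F\xi_{2},\xi_{1}-\xi_{2}\rangle$ by Proposition~\ref{pro2.2} (and Remark~\ref{rem1}), and controlling the $\Phi$-contribution through Proposition~\ref{pro2.1}; choosing $N=1/\varepsilon_{0}$ large then makes each continuation step contractive, and after $N$ steps one arrives at $\varepsilon=1$, which is \eqref{eq:3.5}.

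The main obstacle is precisely this interaction between the non-monotone Volterra term $\Phi$ and the monotone mechanism that dissolves $F$: one must ensure that composing with $\Phi$ (respectively, carrying $\Phi$ through the continuation) does not destroy the contraction, and this is where the refined estimate of Proposition~\ref{pro2.1}---the decay $M^{m}/\sqrt{(m-1)!}\to0$ as $m\to\infty$, rather than the crude bound $\|\Phi(\xi)-\Phi(\overline\xi)\|\le M\|\xi-\overline\xi\|$---must be used, together with a careful balancing of the integer $m$ from Proposition~\ref{pro2.1} against the number of continuation steps $N$. Once these are fixed, the remainder is the routine bookkeeping of the iteration and change-of-variables recalled in Section~\ref{sec:2}, and uniqueness is immediate from the uniqueness of the fixed point in Theorem~\ref{thm3.1}.
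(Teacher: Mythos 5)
Your route (a) is, up to a conjugation, exactly the paper's proof: the paper makes the change of variable $z=P(\xi):=\xi+F(\xi)$, invokes Theorem~\ref{thm2.1} via Proposition~\ref{pro2.2} to get $P^{-1}$ defined everywhere and nonexpansive, rewrites \eqref{eq:3.5} as $z=T(z)$ with $T(z)=-\Phi P^{-1}(z)+g$, and concludes with Theorem~\ref{thm3.1}. Your map $\xi\mapsto R(g-\Phi(\xi))$ is $P^{-1}\circ(g-\Phi)$ while the paper's is $(g-\Phi)\circ P^{-1}$; these are conjugate under $P$, so the difference is immaterial. The one substantive point is the step you flag as a possible obstacle: the paper disposes of it in a single line, writing $\|T^m(z)-T^m(\overline z)\|=\|(-\Phi P^{-1})^m(z)-(-\Phi P^{-1})^m(\overline z)\|$ and asserting the bound $\frac{M^m}{\sqrt{(m-1)!}}\|z-\overline z\|$ ``by virtue of Proposition~\ref{pro2.1} and Lipschitz continuity of $P^{-1}$.'' Your hesitation here is well founded: Proposition~\ref{pro2.1} controls the pure iterate $\Phi^m$, and its factorial gain comes from the nested Volterra domains in the pointwise estimate $|(Vx)(t)-(V\overline x)(t)|^2\le Q^2(t)\int_a^t|x-\overline x|^2\,ds$; the interleaved $P^{-1}$ is a global (Fredholm-type) map that only satisfies a norm bound, not a pointwise one, so peeling it off factor by factor yields only $M^m$, which is useless when $M\ge 1$. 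In other words, the difficulty you identify is real and is precisely the step the paper's own proof does not justify.

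Your fallback (b) is a genuinely different idea (continuation in the $F$-term with $\Phi$ carried at full strength), but as sketched it does not close either: pairing $(\xi_1-\xi_2)+(\Phi\xi_1-\Phi\xi_2)+j\varepsilon_0(F\xi_1-F\xi_2)=y_1-y_2$ with $\xi_1-\xi_2$ and discarding the monotone term leaves you with $\langle\Phi\xi_1-\Phi\xi_2,\xi_1-\xi_2\rangle$, which Proposition~\ref{pro2.1} only bounds below by $-M\|\xi_1-\xi_2\|^2$; the resulting Lipschitz constant $1/(1-M)$ for $P_j$ requires $M<1$, and this fails already in the paper's own Example~\ref{ex:1}, where $M^2=25/18$. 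So to match the paper you should simply present route (a) and accept (or repair) the interleaving step; be aware that the paper itself offers no argument for it beyond the assertion quoted above.
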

\begin{proof} 
	We shall carry out a change of variable
	\begin{equation}\label{eq:3.27}
	z = \xi + F(\xi) \equiv P(\xi).
	\end{equation}
	By  Proposition \ref{pro2.2},  the mapping $F$ is monotone and Lipschitz - continuous with  Lipschitz coefficient equal to $L$. Therefore, by  Theorem \ref{thm2.1}, the  system of equations \eqref{eq:3.27} has a unique solution for any $z \in \mathbb{R}^{n+1}$,\; i.e., the mapping $P^{-1}(z)$ is  determined in the whole space $\mathbb{R}^{n+1}$.  By virtue of the monotonicity of the mapping $F$,  the mapping $P^{-1}$ is Lipschitz - continuous with Lipschitz coefficient equal to $1$. Indeed, for all  $z, \overline{z} \in  \mathbb{R}^{n+1}$ we have
	\[ \| P^{-1}(z) - P^{-1}(\overline{z})  \|  = \|\xi - \overline{\xi}  \|  \le  \|\xi - \overline{\xi} +  F(\xi) - F(\overline{\xi}) ]  \|  = \|z - \overline{z}  \|.	 \]
	After changing the variable \eqref{eq:3.27}, the perturbed system of nonlinear equations \eqref{eq:3.5} will take the following form
	\begin{equation}\label{eq:3.28}
	z + \Phi P^{-1}(z)  = g. 
	\end{equation}
	Define the mapping $T$ as
	\begin{equation}\label{eq:3.29}
	T(z) = - \Phi P^{-1}(z) + g, \; \forall z \in \mathbb{R}^{n+1}.
	\end{equation}
	Then the system of equations \eqref{eq:3.28} can be rewritten as
	\begin{equation}\label{eq:3.30}
	z  = T(z).
	\end{equation}
	It follows from  \eqref{eq:3.29} that for all $z, \overline{z} \in  \mathbb{R}^{n+1}$	 and for some positive integer $m$,
	\[ \|T^m(z) - T^m(\overline{z}) \| = \| (- \Phi P^{-1})^m (z) - (- \Phi P^{-1})^m(\overline{z})  \|.     \]
	By virtue of Proposition \ref{pro2.1} and Lipschitz continuity of the mapping $P^{-1}$, we have
	\[\|T^m(z) - T^m(\overline{z}) \| \le \frac{M^m}{ \sqrt{(m-1)!} } \|z - \overline{z} \|, \; \forall z, \overline{z} \in \mathbb{R}^{n+1}. \]
	Since $\frac{M^m}{ \sqrt{(m-1)!} } < 1 $ when $m$ is sufficiently large, we see that  $T^m$ is a contractive mapping with contraction coefficient equal to $\alpha = \frac{M^m}{ \sqrt{(m-1)!} }$. By Theorem \ref{thm3.1}, the mapping $T$ has a unique fixed point $z^* \in \mathbb{R}^{n+1}$,  i.e., the system of equations \eqref{eq:3.28} has a unique solution $z^* \in \mathbb{R}^{n+1}$ for any $g \in \mathbb{R}^{n+1}$. Consequently, the perturbed system of nonlinear equations \eqref{eq:3.5} has a unique solution $\xi^* $ for any $g \in \mathbb{R}^{n+1}$. This completes the proof of the theorem. 
\end{proof}
\indent In the following proposition, we shall estimate $ \|x^* - \xi^*\|$, where $x^*=(x^*_0, x^*_1, \ldots,x^*_n)^T$ with $x^*_i = x^*(t_i),\; i = 0, 1, \ldots, n$ (note that $x^*(t)$ is an analytical solution of \eqref{eq:1.1}) and $\xi^* = (\xi^*_0, \xi^*_1, \ldots, \xi^*_n)^T$ is the exact solution of the perturbed system of nonlinear equations \eqref{eq:3.5}.
\begin{proposition}\label{pro2.3}
	Let the assumptions {\rm (i)--(v)} be satisfied. Then 
	\begin{equation}\label{eq:3.31}
	\|  x^* - \xi^* \| \le \frac{\sqrt{b-a} \;  | O(h^{\nu_1})| }{1-\alpha},
	\end{equation}
	where $\alpha = \frac{M^m}{ \sqrt{(m-1)!} } <1$ when $m$ is chosen sufficiently large.
\end{proposition}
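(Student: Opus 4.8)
The plan is to compare the two finite-dimensional vectors $x^{*}$ and $\xi^{*}$ by exploiting that $x^{*}$ \emph{almost} satisfies the same system \eqref{eq:3.5} that $\xi^{*}$ satisfies exactly. First I would record that, by \eqref{eq:3.3}, the vector $x^{*}=(x^{*}_{0},\dots,x^{*}_{n})^{T}$ obeys
\[
x^{*}+\Phi(x^{*})+F(x^{*})=g+r,
\]
where the perturbation vector $r=(r_{0},\dots,r_{n})^{T}$ has components $r_{i}=O(h^{\nu_{1}})$, while $\xi^{*}+\Phi(\xi^{*})+F(\xi^{*})=g$ exactly. Next I would push both identities through the same change of variable $z=P(\xi)=\xi+F(\xi)$ used in the proof of Theorem \ref{thm3.3}: writing $z^{*}=P(x^{*})$ and $\zeta^{*}=P(\xi^{*})$, the first becomes $z^{*}+\Phi P^{-1}(z^{*})=g+r$, i.e. $z^{*}=T(z^{*})+r$, and the second becomes $\zeta^{*}=T(\zeta^{*})$, where $T=-\Phi P^{-1}+g$ is exactly the operator of \eqref{eq:3.29}, whose $m$-th iterate is a contraction with coefficient $\alpha=\frac{M^{m}}{\sqrt{(m-1)!}}<1$.

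The core estimate then follows by iterating $T$ on $z^{*}$: since $z^{*}=T(z^{*})+r$, one gets $T^{j}(z^{*})=z^{*}-\sum$ of correction terms, and a standard telescoping argument for a map whose $m$-th power is contractive gives
\[
\|z^{*}-\zeta^{*}\|\le\frac{1}{1-\alpha}\,\sup_{0\le j<m}\|T^{j}(z^{*})-T^{j+1}(z^{*})\|,
\]
and each $\|T^{j}(z^{*})-T^{j+1}(z^{*})\|$ is controlled by $\|r\|$ because consecutive iterates of $z^{*}$ differ by $r$ together with applications of the nonexpansive-in-the-relevant-sense maps $P^{-1}$ and $\Phi$ (using Proposition \ref{pro2.1} with $m=1$, i.e. $\|\Phi(\xi)-\Phi(\bar\xi)\|\le M\|\xi-\bar\xi\|$ — here one should arrange $m$ large enough that both the contraction and this bookkeeping go through). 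Finally, since $P^{-1}$ is Lipschitz with constant $1$ (shown inside the proof of Theorem \ref{thm3.3}), $\|x^{*}-\xi^{*}\|=\|P^{-1}(z^{*})-P^{-1}(\zeta^{*})\|\le\|z^{*}-\zeta^{*}\|$, so it remains only to bound $\|r\|$. By definition of the norm on $\mathbb{R}^{n+1}$, $\|r\|=\bigl(\sum_{i=0}^{n}w_{i}|r_{i}|^{2}\bigr)^{1/2}\le|O(h^{\nu_{1}})|\bigl(\sum_{i=0}^{n}w_{i}\bigr)^{1/2}$, and the quadrature weights for $[a,b]$ sum to $b-a$, giving $\|r\|\le\sqrt{b-a}\,|O(h^{\nu_{1}})|$ and hence \eqref{eq:3.31}.

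The step I expect to be the main obstacle — or at least the one demanding the most care — is the passage from the perturbed fixed-point relation $z^{*}=T(z^{*})+r$ to the clean bound $\|z^{*}-\zeta^{*}\|\le\frac{\|r\|'}{1-\alpha}$ with the \emph{right} constant: because it is $T^{m}$, not $T$, that contracts, one cannot simply invoke the elementary perturbed-contraction lemma, and one must instead iterate $m$ steps at a time, tracking how the single perturbation $r$ propagates through $T,T^{2},\dots,T^{m-1}$ without picking up extra Lipschitz factors larger than what \eqref{eq:3.31} allows. In the write-up I would either cite Theorem \ref{thm3.2} applied to the sequence $x_{k+1}=T(x_{k})$ started at $z^{*}$ (whose successive differences are exactly $\|r\|$-sized) or prove the needed perturbed-$T^{m}$-contraction inequality directly in one short lemma; everything else is the routine quadrature-weight and Cauchy–Schwarz accounting already rehearsed in Propositions \ref{pro2.1} and \ref{pro2.2}.
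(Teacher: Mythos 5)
Your skeleton coincides with the paper's up to the decisive middle step: the paper likewise forms the residual identity from \eqref{eq:3.3} and \eqref{eq:3.4}, uses $\sum_{i=0}^{n}w_i=b-a$ to get $\|x^*-\xi^*+\Phi(x^*)-\Phi(\xi^*)+F(x^*)-F(\xi^*)\|=\sqrt{b-a}\,|O(h^{\nu_1})|$, passes to the $z$-variables $z_x^*=P(x^*)$, $z^*=P(\xi^*)$ (your $z^*$ and $\zeta^*$), and finishes with the $1$-Lipschitz continuity of $P^{-1}$. Where you diverge is in between: the paper runs no perturbed fixed-point iteration at all, but instead lower-bounds $\|z_x^*-z^*+\Phi P^{-1}(z_x^*)-\Phi P^{-1}(z^*)\|$ directly by $(1-\alpha)\|z_x^*-z^*\|$, so it never has to apply $T$ repeatedly to the perturbed point.

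The obstacle you flag is genuine, and neither of your two proposed remedies removes it, so as written your route does not prove \eqref{eq:3.31} with the stated constant. Since only $T^m$ contracts while $T$ itself is merely $M$-Lipschitz (Proposition \ref{pro2.1} with $m=1$ composed with the $1$-Lipschitz $P^{-1}$), propagating the single perturbation $r$ (where $z_x^*=T(z_x^*)+r$) through $T,T^2,\dots,T^{m-1}$ multiplies $\|r\|$ by the Lipschitz constants $M^j/\sqrt{(j-1)!}$ of the intermediate iterates, which exceed $1$ whenever $M>1$ (as in the paper's own example, where $M^2=25/18$). Concretely, the cleanest version of your argument, $\|z_x^*-z^*\|\le\|z_x^*-T^m(z_x^*)\|+\alpha\|z_x^*-z^*\|$ followed by telescoping $z_x^*-T^m(z_x^*)$, yields $\|x^*-\xi^*\|\le\bigl(1+M+M^2+\cdots+M^{m-1}/\sqrt{(m-2)!}\bigr)\sqrt{b-a}\,|O(h^{\nu_1})|/(1-\alpha)$, and invoking Theorem \ref{thm3.2} started at $z_x^*$ produces the same extra factor through $\|x_{m+h_0}-x_{h_0}\|$. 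This is weaker than \eqref{eq:3.31} as stated, although it is of the same order in $h$, so Corollary \ref{cro2.1} still follows. In fairness, the paper's own closing of this step is also questionable --- it substitutes $T(z_x^*)=T(T^m(z_x^*))$, which presumes the non-fixed point $z_x^*$ satisfies $z_x^*=T^m(z_x^*)$, and then treats $\|T(z_x^*)-T(z^*)\|$ as equal to $\|z_x^*-z^*\|$ --- but if your goal is to reproduce the inequality exactly as stated, the perturbed-iteration route cannot do it without the extra multiplicative constant, and you should either accept the weaker constant explicitly or find a different mechanism for the lower bound.
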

\begin{proof} 
	By \eqref{eq:3.3} and \eqref{eq:3.4}, we have
	\begin{gather*}
	\begin{split} 
	& x^*_i - \xi^*_i +  \sum\limits_{j = 0}^{i }{w_{i_j} K_1(t_i, s_j, x^*_j)}    - \sum\limits_{j = 0}^{i }{w_{i_j} K_1(t_i, s_j, \xi^*_j)} +     \sum\limits_{r = 0}^{n }{w_r K_2(t_i, s_r, x^*_r)}\\
	&  - \sum\limits_{r = 0}^{n }{w_r K_2(t_i, s_r, \xi^*_r)}	= - O(h^{\nu_1}), \; i = 0, 1, \ldots, n, 
	\end{split}    
	\end{gather*}
	which means
	\[ x^*_i - \xi^*_i + \varphi_i(x^*) - \varphi_i(\xi^*) +  f_i(x^*) - f_i(\xi^*) 
	= - O(h^{\nu_1}),\; i = 0, 1, \ldots, n. \]
Then we have
	\[ |x^*_i - \xi^*_i + \varphi_i(x^*) - \varphi_i(\xi^*) +  f_i(x^*) - f_i(\xi^*) | 
	= | O(h^{\nu_1})|,\; i = 0, 1, \ldots, n,  \]
	and hence
	\[ w_i |x^*_i - \xi^*_i + \varphi_i(x^*) - \varphi_i(\xi^*) +  f_i(x^*) - f_i(\xi^*) |^2 
	= w_i | O(h^{\nu_1})|^2,\; i = 0, 1, \ldots, n.  \]	
It follows that
	\begin{gather*}
	\begin{split}
	& \|  x^* - \xi^*  + \Phi(x^*) - \Phi(\xi^*) + F(x^*) - F(\xi^*)\|^2 \\
	& = \sum\limits_{i = 0}^{n }{ w_i |x^*_i - \xi^*_i + \varphi_i(x^*) - \varphi_i(\xi^*) +  f_i(x^*) - f_i(\xi^*) |^2  }  
	= | O(h^{\nu_1})|^2	\sum\limits_{i = 0}^{n } {w_i }. 
	\end{split} 
	\end{gather*}
	Since in every Newton--Cotes formula $\sum\limits_{i = 0}^{n } {w_i } = b-a$, we obtain
	\[\|  x^* - \xi^* + \Phi(x^*) - \Phi(\xi^*) + F(x^*) - F(\xi^*) \| = \sqrt{b-a} \; | O(h^{\nu_1})|. \]	
	By virtue of the contraction of the mapping $T^m$ and the monotonicity of the mapping $F$, we have
	\begin{gather*}
	\begin{split}
	  \sqrt{b-a} \; | O(h^{\nu_1})| &  =  \|  x^* + F(x^*) - [ \xi^*  + F(\xi^*)] + \Phi(x^*) - \Phi(\xi^*)\| \\
	 & =     \|  z_x^* - z^* + \Phi P^{-1}(z_x^*) - \Phi P^{-1}(z^*) \|\\
	 & = \|  z_x^* - z^* + T(z_x^*) - T(z^*) \| \\
	 & =    \|  z_x^* - z^* + T(T^m(z_x^*)) - T(T^m(z^*)) \| \\
	 & = \|  z_x^* - z^* + T^{m+1}(z_x^*) - T^{m+1}(z^*) \|\\
	  & =   \|  z_x^* - z^* + T^m(T(z_x^*)) - T^m(T(z^*)) \| \\
	  &	\ge   \|  z_x^* - z^*\|  - \| T^m(T(z_x^*)) - T^m(T(z^*)) \| \\
	  & \ge   \|  z_x^* - z^*\| - \alpha \|  T(z_x^*) - T(z^*)\| \\
	 & =  (1- \alpha) \|  z_x^* - z^*\| \\
	 & =   (1- \alpha)  \|  x^* - \xi^* + F(x^*) - F(\xi^*) \| \\
	 &  \ge   (1- \alpha) \|  x^* - \xi^* \|,
	\end{split} 
	\end{gather*}
	where  $z_x^* = x^* + F(x^*) \equiv P(x^*)$ and $z^* = \xi^* + F(\xi^*) \equiv P(\xi^*)$.
	Consequently,
	\[\|  x^* - \xi^* \| \le \frac{\sqrt{b-a} \;  | O(h^{\nu_1})| }{1-\alpha}. \]	
	This completes the proof of the proposition.
\end{proof} 
\noindent  The inequality \eqref{eq:3.31} leads to the following corollary.
\begin{corollary}\label{cro2.1}
	$\|  x^* - \xi^* \|$ vanishes when $h \to 0$.
\end{corollary}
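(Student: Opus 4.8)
The plan is to read the conclusion straight off the estimate \eqref{eq:3.31} established in Proposition \ref{pro2.3}. That proposition already furnishes
\[
\| x^* - \xi^* \| \le \frac{\sqrt{b-a}\,|O(h^{\nu_1})|}{1-\alpha},
\]
so the whole task reduces to observing that the right-hand side tends to $0$ as $h \to 0$. First I would fix, once and for all, a positive integer $m$ large enough that $\alpha = M^m/\sqrt{(m-1)!} < 1$; such an $m$ exists because $M^m/\sqrt{(m-1)!} \to 0$ as $m \to \infty$ (the factorial in the denominator eventually dominates the geometric numerator), and this choice does not depend on $h$. With $m$, and hence $\alpha$, frozen, the factor $\sqrt{b-a}/(1-\alpha)$ is a fixed positive constant independent of the partition $\mathsf{\Pi}$.

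Next I would recall that the order $\nu_1$ of the Newton--Cotes quadrature satisfies $2 \le \nu_1 \le \nu$, so in particular $\nu_1 > 0$ and the truncation term obeys $|O(h^{\nu_1})| \le C\,h^{\nu_1} \to 0$ as $h \to 0$. Combining the two displays gives $0 \le \| x^* - \xi^* \| \le \bigl(\sqrt{b-a}/(1-\alpha)\bigr)\,|O(h^{\nu_1})| \to 0$, and the squeeze principle yields $\| x^* - \xi^* \| \to 0$, which is exactly the assertion of the corollary.

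The only point that deserves a word of care --- and this is the closest thing to an obstacle --- is that $x^*$ and $\xi^*$ are vectors in $\mathbb{R}^{n+1}$ with $n = (b-a)/h$, so both the ambient dimension and the weighted norm $\|\cdot\|$ vary as $h \to 0$; one must be sure the bound in \eqref{eq:3.31} is uniform in $n$. It is: the weights in every Newton--Cotes formula sum to $b-a$ (as used in the proof of Proposition \ref{pro2.3}), so the constant $\sqrt{b-a}$ is genuinely $n$-independent, and $\alpha$ --- which depends only on $M$ from assumption (iii) and on the fixed $m$ --- was chosen before $h$ was sent to $0$. I would also remark that Propositions \ref{pro2.1} and \ref{pro2.3} are invoked ``for sufficiently large $n$,'' a condition automatically met here since $h \to 0$ forces $n \to \infty$. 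With these remarks the corollary is immediate.
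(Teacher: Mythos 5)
Your proposal is correct and follows the same route as the paper, which offers no separate argument and simply presents the corollary as an immediate consequence of the estimate \eqref{eq:3.31}. Your added remark that the constant $\sqrt{b-a}/(1-\alpha)$ is independent of $n$ (and hence of $h$) is a worthwhile point of care, but it does not change the substance of the argument.
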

\indent Next, we shall construct the iterative algorithm to find approximate solutions of the perturbed system of nonlinear equations \eqref{eq:3.5}. To solve the perturbed system of nonlinear  equations \eqref{eq:3.5}, we first have to solve the system of  equations \eqref{eq:3.28} and after that we solve  the system of  equations \eqref{eq:3.27}. In the proof of Theorem \ref{thm3.3}, we have proved that the system of  equations \eqref{eq:3.28} has a unique solution by using the contraction mapping principle and the system of  equations \eqref{eq:3.27} has a unique solution for any $z \in \mathbb{R}^{n+1}$ by using parameter continuation method. The approximate solutions of the system of  equations \eqref{eq:3.28} are obtained by using the standard iteration process 
\begin{equation}\label{eq:3.32}
z^{(\tau+1)} =    - \Phi P^{-1}(z^{(\tau)}) + g \equiv T(z^{(\tau)} ), \; \tau =0, 1, 2, \ldots. 
\end{equation}
For the initial approximation we take $z^{(0)} = g$. At the same time at each step of above iteration process when calculating the value $P^{-1}(z^{(\tau)})$,  we have to solve the system of equations  of the form \eqref{eq:3.27}, as
\begin{equation}\label{eq:3.33}
\xi + F(\xi) = z^{(\tau)}.
\end{equation}
Substituting $F$ for $A$ in the iteration processes \eqref{eq:2.5a}--\eqref{eq:2.5d},  the approximate solutions of  the  system of equations \eqref{eq:3.33} are obtained by using the following iteration processes 
\begin{subequations}
	\begin{align}
	\xi^{(k+1)} & = - \varepsilon_0 F(\xi^{(k)}) + u^{(l)},\; k=0, 1, 2, \ldots,\label{eq:3.34a}\\
	u^{(l+1)} & = - \varepsilon_0 F G_1^{-1}(u^{(l)}) + v^{(c)},\; l=0,1, 2,  \ldots ,\label{eq:3.34b}\\
	& \; \ldots,\label{eq:3.34c}\\
	y^{(p+1)} & = - \varepsilon_0 F G_1^{-1} \cdots G_{N-1}^{-1}(y^{(p)}) + z^{(\tau)}, \; p=0, 1, 2, \ldots.\label{eq:3.34d}
	\end{align}
\end{subequations}
Therefore the approximate  solutions of the perturbed system of nonlinear equations \eqref{eq:3.5}  can be found by the following iteration processes
\begin{subequations}
	\begin{align}
	\xi^{(k+1)} & = - \varepsilon_0 F(\xi^{(k)}) + u^{(l)},\; k=0, 1, 2, \ldots,\label{eq:3.35a}\\
	u^{(l+1)} & = - \varepsilon_0 F G_1^{-1}(u^{(l)}) + v^{(c)},\; l=0,1, 2,  \ldots ,\label{eq:3.35b}\\
	& \; \ldots,\label{eq:3.35c}\\
	y^{(p+1)} & = - \varepsilon_0 F G_1^{-1} \cdots G_{N-1}^{-1}(y^{(p)}) + z^{(\tau)}, \; p = 0, 1, 2, \ldots\;\label{eq:3.35d}\\
	z^{(\tau+1)} & = - \Phi P^{-1}(z^{(\tau)}) + g, \; \tau = 0, 1, 2, \ldots, z^{(0)} = g. \label{eq:3.35e}
	\end{align}
\end{subequations}
\indent Now  we estimate the error of approximate solutions of the  perturbed system of nonlinear equations \eqref{eq:3.5}. Assume that the number of steps in each iteration scheme of the iteration processes \eqref{eq:3.35a}--\eqref{eq:3.35e} is the same and equals $n_0$. Let $\xi^{(n_0)}$ be approximate solutions of the perturbed system of nonlinear  equations  \eqref{eq:3.5}. Note that $\xi^{(n_0)}$ depends on $N$, hence we denote $\xi(n_0,N) \equiv \xi^{(n_0)}$. We have the following result. 
\begin{theorem}\label{thm3.4}
	Let the assumptions of Theorem  \ref{thm3.3} be satisfied. Then the sequence of approximate solutions $ \{\xi(n_0,N)\}, \;n_0 = 1, 2, \ldots$  constructed by iteration processes \eqref{eq:3.35a}--\eqref{eq:3.35e} converges to the exact solution $\xi^*$ of  the perturbed system of nonlinear equations  \eqref{eq:3.5}. Moreover, the following estimates hold
	\begin{equation}\label{eq:3.36}
	\|\xi(n_0,N) - \xi^* \| \le   (1+M)  \frac{q^{n_0+1}}{1-q} \frac{e^{qN}-1}{e^q - 1}  C_{n'} \|g\| +    \frac{\alpha^{\frac{n_0-h_0}{m}}}{1-\alpha} C_m  \|g\|, 
	\end{equation} 
	where 
	$	C_{n'}  =  \frac{\gamma}{1-\gamma}  \frac{M^{n'} }{\sqrt{(n' -1)!}} +   \frac{M^{n'} }{\sqrt{(n'-1)!}} +  \cdots + M+1,
	C_m 	 = \frac{M^m}{\sqrt{(m-1)!}}  + \frac{M^{m-1}}{\sqrt{(m-2)!}} + \cdots + M,$ 
	$N$ is the smallest natural number such that $q = \frac{L}{N}<1$, $n'$ is a natural number such that $\gamma = \frac{M}{\sqrt{n'}} < 1$ and $m$ is chosen sufficiently large such that $\alpha = \frac{M^m}{\sqrt{(m-1)!}} < 1$, $h_0 \in \left\lbrace 0, 1, \ldots, m-1 \right\rbrace $ is the residual of $\frac{n_0}{m}$, $n_0 >	max  \left\{m, n'  \right\} $.
\end{theorem}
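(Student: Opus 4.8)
The plan is to split the total error $\|\xi(n_0,N)-\xi^{*}\|$ into the error produced by the outer fixed-point iteration \eqref{eq:3.35e} for the equation \eqref{eq:3.28} $z+\Phi P^{-1}(z)=g$, and the error produced by the inner parameter continuation scheme \eqref{eq:3.35a}--\eqref{eq:3.35d}, which at every outer step delivers only an approximation of $P^{-1}(z^{(\tau)})$, i.e. of the exact solution of \eqref{eq:3.33} $\xi+F(\xi)=z^{(\tau)}$. The two pieces are then estimated, respectively, by Theorem~\ref{thm3.2} (applied to the outer iteration, whose $m$-th iterate $T^{m}=(-\Phi P^{-1})^{m}$ is a contraction with coefficient $\alpha=\frac{M^{m}}{\sqrt{(m-1)!}}$, as shown in the proof of Theorem~\ref{thm3.3}) and by Theorem~\ref{thm2.2} (applied to the inner iteration with $A=F$, whose Lipschitz constant $L$ and monotonicity are Proposition~\ref{pro2.2}, and with $N$ the smallest integer such that $q=\frac{L}{N}<1$).

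First I would record convergence. By Theorem~\ref{thm3.1} the exact outer iteration $z^{(\tau+1)}=T(z^{(\tau)})$ converges to $z^{*}$ with $\xi^{*}=P^{-1}(z^{*})$, and by Theorem~\ref{thm2.2} the inner iterates converge to $P^{-1}(z^{(\tau)})$; letting $n_{0}\to\infty$ drives both the inner truncation error and the outer iteration error to $0$, hence $\xi(n_{0},N)\to\xi^{*}$.

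For the quantitative estimate \eqref{eq:3.36} I would proceed in three steps. \textbf{(a)} Apply Theorem~\ref{thm3.2} to $z^{(\tau+1)}=T(z^{(\tau)})$ with data $(\alpha,m,h_{0})$, $h_{0}$ the residual of $\frac{n_{0}}{m}$, to obtain $\|\tilde z^{(n_{0})}-z^{*}\|\le\frac{\alpha^{(n_{0}-h_{0})/m}}{1-\alpha}\|\tilde z^{(m+h_{0})}-\tilde z^{(h_{0})}\|$ for the exact outer iterates $\tilde z^{(\tau)}$, then telescope $\|\tilde z^{(m+h_{0})}-\tilde z^{(h_{0})}\|\le\sum_{j}\|\tilde z^{(j+1)}-\tilde z^{(j)}\|$ and bound $\|\tilde z^{(j+1)}-\tilde z^{(j)}\|=\|T^{j}(\tilde z^{(1)})-T^{j}(\tilde z^{(0)})\|\le\frac{M^{j}}{\sqrt{(j-1)!}}\|\tilde z^{(1)}-\tilde z^{(0)}\|$ using Proposition~\ref{pro2.1}, the $1$-Lipschitz continuity of $P^{-1}$, $\tilde z^{(0)}=g$ and the normalization $\Phi(0)=F(0)=0$; the sum collapses to $C_{m}\|g\|$, giving the second term of \eqref{eq:3.36}. \textbf{(b)} Write $z^{(\tau+1)}=T(z^{(\tau)})+\delta^{(\tau)}$, where $\delta^{(\tau)}=\Phi(P^{-1}(z^{(\tau)}))-\Phi(\text{approximate }P^{-1}(z^{(\tau)}))$, so that $\|\delta^{(\tau)}\|\le M\,\frac{q^{n_{0}+1}}{1-q}\frac{e^{qN}-1}{e^{q}-1}\|z^{(\tau)}\|$ by Proposition~\ref{pro2.1} (one step) and Theorem~\ref{thm2.2}; bound $\|z^{(\tau)}\|$ uniformly in $\tau$ via Propositions~\ref{pro2.1}--\ref{pro2.2} and a geometric series with ratio $\gamma=\frac{M}{\sqrt{n'}}<1$ (choosing $n'>M^{2}$), which produces the constant $C_{n'}$, and carry the $\delta^{(\tau)}$ through the outer recursion to bound $\|z^{(n_{0})}-\tilde z^{(n_{0})}\|$; together with the direct error $\|\xi(n_{0},N)-P^{-1}(z^{(n_{0})})\|$ of the final inner solve (estimated again by Theorem~\ref{thm2.2}), the direct term contributing a factor $1$ and each $\delta^{(\tau)}$ contributing a factor $M$ combine into the factor $1+M$, giving the first term of \eqref{eq:3.36}. \textbf{(c)} Combine by $\|\xi(n_{0},N)-\xi^{*}\|\le\|\xi(n_{0},N)-P^{-1}(z^{(n_{0})})\|+\|z^{(n_{0})}-\tilde z^{(n_{0})}\|+\|\tilde z^{(n_{0})}-z^{*}\|$, using $1$-Lipschitz continuity of $P^{-1}$ on the last two summands.

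The main obstacle is step (b): making rigorous the propagation of the inner parameter continuation (truncation) errors through the outer iteration, which is only contractive after $m$ steps, so a naive geometric-series estimate does not close. The cleanest route is an auxiliary lemma: if $z^{(\tau+1)}=T(z^{(\tau)})+\delta^{(\tau)}$ with each $\|\delta^{(\tau)}\|$ controlled by the per-step parameter continuation estimate, then the deviation of $\{z^{(\tau)}\}$ from $\{\tilde z^{(\tau)}\}$ stays bounded by the claimed constant times $\|g\|$. Establishing this, and the uniform bound on $\|z^{(\tau)}\|$ that it requires (which is where the auxiliary exponent $n'$ and the constant $C_{n'}$ genuinely enter), is the technical heart; the remainder is routine bookkeeping with the Lipschitz estimates of Propositions~\ref{pro2.1}--\ref{pro2.2}.
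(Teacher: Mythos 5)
Your plan is essentially the paper's own proof: the same outer/inner decomposition, the same application of Theorem~\ref{thm3.2} to the outer iteration with the telescoping bound $\|z^{(m+h_0)}-z^{(h_0)}\|\le C_m\|g\|$, the same uniform bound $\|z^{(\tau)}\|\le C_{n'}\|g\|$ obtained from the geometric tail with ratio $\gamma=M/\sqrt{n'}$, the same use of Theorem~\ref{thm2.2} (with $A=F$, Proposition~\ref{pro2.2} supplying monotonicity and the Lipschitz constant $L$) for the inner solves, and the same origin of the factor $1+M$ (a factor $M$ from pushing the inner error through $\Phi$ via Proposition~\ref{pro2.1}, plus a factor $1$ from the final inner solve via the $1$-Lipschitz continuity of $P^{-1}$). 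The one place where you are more demanding than the paper is your step (b): the paper does not actually carry the perturbations $\delta^{(\tau)}$ through the outer recursion --- it simply asserts that each value $T(z^{(\tau)})$ is computed with error at most $M\Delta(n_0)$ and then adds this single quantity to the truncation error from Theorem~\ref{thm3.2}, with no accounting for how per-step errors accumulate over $n_0$ steps of a map that is contractive only after $m$ iterations. So the auxiliary propagation lemma you identify as the technical heart is precisely the step the published proof leaves implicit; supplying it would make the argument strictly more rigorous than the one in the paper, while omitting it puts you on exactly the same footing.
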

\begin{proof}
	For simplicity, we assume that  $\Phi(0) = 0$ and $F(0) = 0 $, where $0 = (0, 0 , \ldots,0)^T$ denotes the zero element in $\mathbb{R}^{n+1}$. Indeed, if $\Phi(0) \ne 0$  or $F(0) \ne 0$, we can define two mappings $\Phi_1, F_1: \mathbb{R}^{n+1} \to \mathbb{R}^{n+1}$ by 
	\begin{equation}\label{eq:3.37}
	\Phi_1(\xi) = \Phi(\xi) - \Phi(0), F_1(\xi) = F(\xi) - F(0), 	
	\end{equation} 
	then $\Phi_1(0) = F_1(0) =0$ and the perturbed system of nonlinear equations  \eqref{eq:3.5} is equivalent to
	\begin{equation}\label{eq:3.38}
	\xi  + \Phi_1(\xi) + F_1(\xi)= g_1, 	
	\end{equation} 
	where $g_1 =  g  - \Phi(0)- F(0)$. It follows from \eqref{eq:3.37} that for all $\xi, \overline{\xi} \in \mathbb{R}^{n+1}$ 
	\[\Phi_1(\xi) - \Phi_1(\overline{\xi}) = \Phi(\xi) - \Phi(\overline{\xi}), F_1(\xi) - F_1(\overline{\xi}) = F(\xi) - F(\overline{\xi}).\]
	Therefore  the Propositions \ref{pro2.1} and \ref{pro2.2} can be applied to the mappings $\Phi_1$ and $F_1$, respectively.  Consequently, Theorem \ref{thm3.3} can be applied to the perturbed system of nonlinear equations \eqref{eq:3.38}. \\
	\indent We split the proof into two steps.\\
	\indent Step $1$. We estimate the error of approximate solutions of the system of  equations \eqref{eq:3.28}. Firstly,  we estimate the errors in  calculating the values $T(z^{(\tau)} ) =  - \Phi P^{-1}(z^{(\tau)}) + g, \; \tau = 1, 2, \ldots, n_0 -1$. Since $F(0) = 0$, it follows that $P(0) = 0 + F(0) =0$. Thus $T(0) = - \Phi P^{-1}(0) + g = g \equiv z^{(0)}$. At the same time at each step of the iteration process \eqref{eq:3.32} when calculating the value $P^{-1}(z^{(\tau)})$,  we will use the iteration processes \eqref{eq:3.34a}--\eqref{eq:3.34d}. Let $\xi^{(n_0)}_{z^{(\tau)}}$ and $\xi^*_{z^{(\tau)}}$ be the approximate and exact values of $P^{-1}(z^{(\tau)})$, respectively. It follows from Theorem \ref{thm2.2} that the values $P^{-1}(z^{(\tau)})$ are calculated with the error
	\begin{equation}\label{eq:3.39}
	\|\xi^{(n_0)}_{z^{(\tau)}}- \xi^*_{z^{(\tau)}} \| \le \frac{q^{n_0+1}}{1-q} \frac{e^{qN}-1}{e^q - 1} \|z^{(\tau)}\|,
	\end{equation}
	where $N$ is the smallest natural number such that $q = \frac{L}{N}<1, n_0 = 1, 2, \ldots$. \\
	\indent Since $\left\lbrace z^{(\tau)} \right\rbrace,\; \tau =1, 2, \ldots$ is a convergence sequence, it follows that  $\|z^{(\tau)}\|$ is bounded for all positive integer $\tau$.  We now determine the supremum  of $\|z^{(\tau)}\|, \;\tau \in \left\lbrace 1, 2, \ldots, n_0 \right\rbrace$ ($n_0$ is the number of steps in each iteration scheme). For any $\tau \in \left\lbrace 1, 2, \ldots, n_0 \right\rbrace $ we have
	\begin{gather}\label{eq:3.40} 
	\begin{split} 
	\|z^{(\tau)} \| &  \le \|z^{(\tau)} - z^{(\tau-1)}\| + \|z^{(\tau-1)} - z^{(\tau-2)}\|+ \cdots + \|z^{(1)} - z^{(0)}\|  + \|z^{(0)}\|  \\
	& \le \|z^{(n_0)} - z^{(n_0-1)}\| + \|z^{(n_0-1)} - z^{(n_0-2)}\|+ \cdots + \|z^{(\tau+1)} - z^{(\tau)}\|   \\
	& \;\;\; + \|z^{(\tau)} - z^{(\tau-1)}\| + \|z^{(\tau-1)} - z^{(\tau-2)}\|+ \cdots + \|z^{(1)} - z^{(0)}\|  + \|z^{(0)}\|.
	\end{split}
	\end{gather}
	On the other hand, we have
	\begin{align*}
	& \|z^{(n_0)} - z^{(n_0-1)}\| + \|z^{(n_0-1)} - z^{(n_0-2)}\|+ \cdots +  \|z^{(1)} - z^{(0)}\|  + \|z^{(0)}\|\\
	= &  \|T^{n_0}(g) - T^{n_0}(0)\| + \|T^{n_0-1}(g) - T^{n_0 -1}(0)\| + \cdots + \|T(g) - T(0) \| + \|g \|\\
	\le & \left[ \frac{M^{n_0} }{\sqrt{(n_0-1)!}} + \frac{M^{n_0 -1} }{\sqrt{(n_0-2)!}} + \cdots +  M  \right] \|g\| + \|g\|\\
	= &  \left[ \frac{M^{n_0} }{\sqrt{(n_0 -1)!}} + \frac{M^{n_0 -1} }{\sqrt{(n_0-2)!}} + \cdots +  M +1 \right] \|g\|.
	\end{align*}
	Let $n'$ be natural number such that $\gamma = \frac{M}{\sqrt{n'}} < 1$. Then for any $n_0 > n'$, we have
	\begin{gather*}
	\begin{split}
	& \frac{M^{n_0} }{\sqrt{(n_0 -1)!}} + \frac{M^{n_0 -1} }{\sqrt{(n_0-2)!}} + \cdots +  M +1\\
	= &  \frac{M^{n_0} }{\sqrt{(n_0-1)!}}  + \frac{M^{n_0-1} }{\sqrt{(n_0 -2)!}}  + \cdots + \frac{M^{n' +1} }{\sqrt{n'!}} + \frac{M^{n'} }{\sqrt{(n'-1)!}} +  \cdots + M +1\\
	\le &  \frac{M^{n'} }{\sqrt{(n'-1)!}} \gamma^{n_0 - n'} + \frac{M^{n'} }{\sqrt{(n'-1)!}} \gamma^{n_0 - n'-1} + \cdots + \frac{M^{n'} }{\sqrt{(n'-1)!}} \gamma + \frac{M^{n'} }{\sqrt{(n'-1)!}} \\
	& + \cdots  + M+1\\
	= &  ( \gamma^{n_0 - n'}+ \gamma^{n_0 - n' -1} +  \cdots + \gamma  )  \frac{M^{n'} }{\sqrt{(n'-1)!}}  + \frac{M^{n'} }{\sqrt{(n'-1)!}} +  \cdots + M +1\\ 
	= &   \gamma \frac{1- \gamma^{n_0 - n'}}{1- \gamma} \frac{M^{n'} }{\sqrt{(n'-1)!}} +   \frac{M^{n'} }{\sqrt{(n' -1)!}} + \cdots + M + 1\\
	\le &    \frac{\gamma}{1-\gamma}  \frac{M^{n'} }{\sqrt{(n'-1)!}} +   \frac{M^{n'} }{\sqrt{(n'-1)!}} + \cdots  + M +1 \equiv C_{n'}.
	\end{split} 
	\end{gather*} 
	Thus
	\begin{equation}\label{eq:3.41}
	\|z^{(n_0)} - z^{(n_0-1)}\| + \|z^{(n_0-1)} - z^{(n_0-2)}\|+ \cdots +  \|z^{(1)} - z^{(0)}\|  + \|z^{(0)}\|  \le     C_{n'} \|g\|. 
	\end{equation}
	It follows from \eqref{eq:3.40} and \eqref{eq:3.41}  that
	\begin{equation*}
	\|z^{(\tau)} \| \le     C_{n'} \|g\|
	\end{equation*}
	for any $\tau \in \left\lbrace 1, 2, \ldots, n_0 \right\rbrace $. Hence the values $P^{-1}(z^{(\tau)})$ are calculated with the error
	\[ \|\xi^{(n_0)}_{z^{(\tau)}}- \xi^*_{z^{(\tau)}} \|  \le  \Delta(n_0),      \]
	where
	\begin{equation}\label{eq:3.42}
	\Delta(n_0) =  \frac{q^{n_0 + 1}}{1-q} \frac{e^{qN}-1}{e^q - 1}  C_{n'} \|g\|.   
	\end{equation}
	By Proposition \ref{pro2.1}, we have
	\[ \| T(z) - T(\overline{z}) \| = \| \Phi P^{-1}(z) - \Phi P^{-1}(\overline{z}) \| \le  M \| P^{-1}(z) -P^{-1}(\overline{z})\|,\; \forall  z, \overline{z} \in \mathbb{R}^{n+1}.\]   
	Therefore  the values $T(z^{(\tau)} ) =  - \Phi P^{-1}(z^{(\tau)}) + g, \;\tau = 1, 2, \ldots, n_0 -1$ are calculated with the error not more than $M \Delta(n_0)$.\\
	\indent Next, we shall  estimate the error of an iteration process in the calculation of $z$. 	By Theorem \ref{thm3.2}, the error of an iteration process in the calculation of $z$ equals $  \frac{\alpha^{\frac{n_0 - h_0}{m}}}{1-\alpha} \| z^{(m+h_0)}-z^{(h_0)} \|$, 
	where  $m$ is chosen sufficiently large such that $\alpha = \frac{M^m}{\sqrt{(m-1)!}}$ $< 1$, $h_0 \in \left\lbrace 0, 1, \ldots, m-1 \right\rbrace $ is the residual of $\frac{n_0}{m}$. We have
	\begin{gather}\label{eq:3.43}
	\begin{split}
	& \| z^{(m+h_0)}-z^{(h_0)} \|  \\ 
	\le & \| z^{(m+h_0)}-z^{(m+ h_0 - 1)} \|
	+ \| z^{(m+h_0-1)}-z^{(m+ h_0 - 2)} \| 
	+ \cdots    + \| z^{(h_0+1)}-z^{( h_0 )} \|  \\
	= & \| T^{m+h_0}(g) - T^{m+h_0}(0) \| +\| T^{m+h_0-1}(g) - T^{m+h_0-1}(0) \| 	 \\
	&  + \cdots+   \| T^{h_0+1}(g)- T^{h_0+1}(0) \|  \\
	\le  & 	  \left[ \frac{M^{m+h_0}}{\sqrt{(m+h_0-1)!}} 
	+ \frac{M^{m+h_0-1}}{\sqrt{(m+h_0-2)!}}
	+ \cdots + \frac{M^{h_0+1}}{\sqrt{h_0!}} \right]  \|g\|  \equiv R_{m,h_0} \|g\|,   
		\end{split}
	\end{gather}
	where $R_{m,h_0} = \frac{M^{m+h_0}}{\sqrt{(m+h_0-1)!}}  + \frac{M^{m+h_0-1}}{\sqrt{(m+h_0-2)!}} + \cdots + \frac{M^{h_0+1}}{\sqrt{h_0!}}$. 
	Let us prove that
	\begin{align}  
	\max \{R_{m,h_0}, h_0 \in \left\lbrace 0, 1, \ldots, m-1 \right\rbrace\}  & = \frac{M^m}{\sqrt{(m-1)!}}  + \frac{M^{m-1}}{\sqrt{(m-2)!}} + \cdots + M \nonumber \\
	& 	\equiv C_m.  \label{eq:3.44}
    \end{align} 
	Obviously, $R_{m,0} = C_m$. For some non-negative integer $h', 0 \le h' \le h_0$,  we have
	\[R_{m,h'} =  \frac{M^{m+h'}}{\sqrt{(m+h'-1)!}}  + \frac{M^{m+h'-1}}{\sqrt{(m+h'-2)!}} + \cdots + \frac{M^{h'+1}}{\sqrt{h'!}}          \]
	and
	\[ R_{m,h'+1}  =  \frac{M^{m+h'+1}}{\sqrt{(m+h')!}} + \frac{M^{m+h'}}{\sqrt{(m+h'-1)!}}  + \frac{M^{m+h'-1}}{\sqrt{(m+h'-2)!}}  + \cdots + \frac{M^{h'+2}}{\sqrt{(h'+1)!}}.\]
	Hence
	\[R_{m,h'+1} - R_{m,h'} =  \frac{M^{m+h'+1}}{\sqrt{(m+h')!}} -  \frac{M^{h'+1}}{\sqrt{h'!}}.   \]
	We have 
	\begin{align*}
	\frac{M^{m+h'+1}}{\sqrt{(m+h')!}} &  = \frac{M^m}{\sqrt{(m-1)!}}  \frac{M^{h'+1}}{\sqrt{m(m+1) \cdots (m+h')}}\\
	&   = \alpha \frac{M^{h'+1}}{\sqrt{m(m+1) \cdots (m+h')}} .
	\end{align*}
	Since $\alpha \in \left[ 0,1\right) $, it follows that
	\begin{equation}\label{eq:3.45}
	\frac{M^{m+h'+1}}{\sqrt{(m+h')!}} \le \frac{M^{h'+1}}{\sqrt{m(m+1) \cdots (m+h')}}. 
	\end{equation}
	On the other hand,  we have 
	\[m(m+1) \cdots (m+h') \ge 1 (1+1) \cdots (h' +1) \ge h'! \]
	for any positive integer $m$. Hence
	\begin{equation}\label{eq:3.46}
	\frac{M^{h' +1}}{\sqrt{m(m+1) \cdots (m+h')}} \le  \frac{M^{h'+1}}{\sqrt{h'!}}.
	\end{equation}
	Combining  \eqref{eq:3.45} and  \eqref{eq:3.46}, we get
	\[\frac{M^{m+h'+1}}{\sqrt{(m+h')!}} \le \frac{M^{h'+1}}{\sqrt{h'!}}. \]
	Thus
	\[  R_{m,h'+1} - R_{m,h'} \le 0,         \]
	which implies that $R_{m,h'+1} \le  R_{m,h'}$ for some non-negative integer $h', 0 \le h' \le h_0$. Therefore \eqref{eq:3.44} is proved. 
	It follows from  \eqref{eq:3.43} and \eqref{eq:3.44}  that
	\[  \| z^{(m+h_0)}-z^{(h_0)} \|  \le C_m  \|g\|   \]
	for every integer $h_0 \in \left\lbrace 0, 1, \ldots, m-1 \right\rbrace $. Hence the error of an iteration process in the calculation of $z$ equals 
	$ \frac{\alpha^{\frac{n_0-h_0}{m}}}{1-\alpha} C_m  \|g\|$.\\ 
	\indent Consequently, the error of approximate solutions $z^{(n_0)}$ of the system of equations \eqref{eq:3.28} gives the estimate
	\[     \| z^{(n_0)} - z^*\| \le M \Delta(n_0) + \frac{\alpha^{\frac{n_0-h_0}{m}}}{1-\alpha} C_m  \|g\|.  \]
	\indent Step $2$. We estimate the error of approximate solutions of the  system of equations \eqref{eq:3.27}
	\[P(\xi) \equiv \xi + F(\xi) = z.\]
	Since the mapping $P^{-1}$ is Lipschitz - continuous with Lipschitz coefficient equal to $1$, the substitution of the error $ M \Delta(n_0) + \frac{\alpha^{\frac{n_0-h_0}{m}}}{1-\alpha} C_m \|g\|$ into the right - hand side of the system of equations \eqref{eq:3.27} causes an error of not more than $ M \Delta(n_0) + \frac{\alpha^{\frac{n_0-h_0}{m}}}{1-\alpha} C_m  \|g\|$ in the corresponding solution $\xi$. The error of an iteration process in the calculation of $\xi$ equals $\Delta(n_0)$. Consequently,
	\begin{gather*}
	\begin{split}
	\| \xi(n_0, N) - \xi^*\|  & \le M \Delta(n_0) +  \frac{\alpha^{\frac{n_0-h_0}{m}}}{1-\alpha} C_m  \|g\| + \Delta(n_0) \\
	&  = (1+M)  \Delta(n_0) +  \frac{\alpha^{\frac{n_0-h_0}{m}}}{1-\alpha} C_m  \|g\|.   	
	\end{split} 
	\end{gather*} 
	By \eqref{eq:3.42}, we obtain  \eqref{eq:3.36}.
	This completes the proof of the theorem.
\end{proof}
\begin{remark}\label{rem:3.1}
	Let $d$ be integer part of $\frac{n_0}{m}$, i.e., $n_0 = m d + h_0,$ $ h_0 \in \left\lbrace 0, 1, \ldots, m-1 \right\rbrace$. We have $n_0 + 1 = m d +h_0+1 \ge m d + 1$ for every  $h_0 \in \left\lbrace 0, 1, \ldots, m-1 \right\rbrace$.  Since $0 < q < 1$,  it follows that $q^{n_0 + 1} \le q^{m d+1}$. From this and  \eqref{eq:3.36},  we have 
	\begin{gather}\label{eq:3.47}
	\begin{split}
	\!{\| \xi(n_0,N) - \xi^*\|} & \le  (1+M)  \frac{q^{m d+1}}{1-q} \frac{e^{qN}-1}{e^q - 1}  C_{n'} \|g\| +    \frac{\alpha^{\frac{n_0-h_0}{m}}}{1-\alpha} C_m  \|g\|\\
	& =  (1+M) \frac{q}{1-q} \frac{e^{qN}-1}{e^q - 1}  C_{n'} \|g\| q^{m d}   +    \frac{ C_m }{1-\alpha}   \|g\| \alpha^{d}\\
	& = C_1 q^{m d} + C_2 \alpha^{d},
	\end{split} 
	\end{gather} 
	where 
	\[ C_1 =    (1+M)\frac{q}{1-q}  \frac{e^{qN}-1}{e^q - 1}  C_{n'}  \| g \|, C_2 = \frac{C_m }{1-\alpha}\| g \|. \]
	Let $ \beta= \max\left\{ {q^m, \alpha} \right\}$.  From \eqref{eq:3.47}, we get
	\begin{equation}\label{eq:3.48}
	\|\xi(n_0,N) - \xi^* \|  \le (C_1 +  C_2 ) \beta^{d}.     
	\end{equation}
	It follows from \eqref{eq:3.48} that for any given $\epsilon > 0$, we can find the number of iteration such that $\|\xi(n_0,N) - \xi^* \| \le \epsilon $.
\end{remark}	
\begin{remark}\label{rem:3.2}
	We shall now estimate the complexity of the proposed iterative algorithm \eqref{eq:3.35a}--\eqref{eq:3.35e}. The iteration processes \eqref{eq:3.35a}--\eqref{eq:3.35e} can be written as the following symbolic notation
	\begin{gather}\label{eq:3.49}
	\begin{split}	
	\xi^{(k+1)} &  = \underbrace {-  \frac{1}{N} F(\xi^{(k)}) - \frac{1}{N}F(\xi^{(l)}) - \cdots -  \frac{1}{N} F(\xi^{(p)})}_{N\;terms} - \Phi(\xi^{(\tau)})  + g,\\
	& \;\;\;\; k, l, \ldots,  \tau = 0, 1, \ldots, n_0.
	\end{split}	
	\end{gather}
	The procedure for calculating each value $F(\xi), \Phi(\xi)$ in the specified element $\xi$ is called an elementary operation. We shall call the number of elementary operations necessary to implement algorithm \eqref{eq:3.35a}--\eqref{eq:3.35e} is the volume of the calculations $M(n_0,N)$. From the symbolic notation \eqref{eq:3.49} it follows that    $   M(n_0,N) \le  (n_0+1)^{N+1}$.
\end{remark}
\section{Illustrative example}
\label{sec:4}
In this section, to illustrate our above results an example is presented. The computations associated with the example were performed using Maple 12
on personal computer.
\begin{example}\label{ex:1}
	Consider the nonlinear Volterra--Fredholm integral equation
	\begin{gather}\label{eq:4.1}
	\begin{split}
	& x(t)  + 5 \int\limits_0^{t}{ts \cos[x(s)]ds} + \frac{11}{2} \int\limits_0^1{t^2 s^2 \, x(s)ds} =\frac{11}{8}t^2 - 4t+ 5 t \cos(t) \\
	&  + 5 t^2  \sin(t),\;  0 \le  t \le 1.  
	\end{split}
	\end{gather}
\end{example}
\noindent The analytical solution of this integral equation is $x(t)=t$ on $[0,1]$. It is easy to verify that the functions $K_1(t,s,x) = 5 ts  \cos[x(s)]$ and  $K_2(t,s,x) = \frac{11}{2} t^2 s^2 \,x(s)$ satisfy the conditions (i)-(v)  of the Theorem \ref{thm3.3} with $M^2 = \frac{25}{18}, L^2 = \frac{121}{100}$. For approximating the left-hand integrals,  we use composite midpoint rule and take a partition with the discretization parameter $h = \frac{1}{50}$.  We take $\epsilon = 10^{-3}, N=2, m=8, n' = 6$.  It follows from \eqref{eq:3.48} that $d \ge 6$. Taking $d = 6$. Since $n_0 = m d + h_0 = 8 d + h_0 ,\; h_0 \in \left\lbrace 0, 1, \ldots, 7 \right\rbrace$,  we have $n_0 \ge 55 $. Taking $n_0 = 55$,  the  number of iterations needed is $166375$  (the number of steps in each iteration scheme is the same and equals $n_0 = 55$).    Table \ref{tab:1} presents approximate solutions obtained by using the iteration processes \eqref{eq:3.35a}--\eqref{eq:3.35e} with $N=2$ and $n_0 = 55$, also exact solutions are given for comparison.   
\begin{table}[tbhp]
	\caption{ Comparison of the exact and approximate solutions for Example \ref{ex:1}.}
		\label{tab:1}
	\centering
	\scriptsize {\tabcolsep = 0.2mm
		\begin{tabular}{cccccccc}
			\hline\noalign{\smallskip}
			Nodes & Exact   & Approximate  & Absolute & \; Nodes  & Exact   & Approximate  & Absolute \\
			$t$ & solutions & solutions & error & $\;t$ & solutions & solutions & error\\
			\noalign{\smallskip}\hline\noalign{\smallskip}
			$0.01$ & $0.01$ & $0.0099948368$ & $\;\; 5.1632000 \times 10^{-6}$& $0.51$ & $0.51$  & $0.5041937466$ & $\;\; 5.80625340 \times 10^{-3}$  \\
			$0.03$ & $0.03$ & $0.0299685542$ & $\;\;3.1445800 \times 10^{-5}$& $0.53$ & $0.53$  & $0.5238057793$ & $\;\;6.19422070 \times 10^{-3}$  \\
			$0.05$ & $0.05$ & $0.0499210957$ & $\;\;7.8904300 \times 10^{-5}$& $0.55$ & $0.55$  & $0.5434091045$ & $ \;\;6.59089550\times 10^{-3}$  \\
			$0.07$ & $0.07$ & $0.0698526617$ & $\;\;1.4733830 \times 10^{-4}$& $0.57$ & $0.57$  & $0.5630033477$ & $\;\; 6.99665230\times 10^{-3}$  \\
			$0.09$ & $0.09$ & $0.0897635423$ & $\;\;2.3645770 \times 10^{-4}$& $0.59$ & $0.59$  & $0.5825879388$ & $ \;\;7.41206120\times 10^{-3}$  \\
			$0.11$ & $0.11$ & $0.1096541138$ & $\;\;3.4588620 \times 10^{-4}$& $0.61$ & $0.61$  & $0.6021621161$ & $ \;\;7.83788390\times 10^{-3}$  \\
			$0.13$ & $0.13$ & $0.1295248320$ & $\;\;4.7516800 \times 10^{-4}$& $0.63$ & $0.63$  & $0.6217248525$ & $\;\;8.27514750 \times 10^{-3}$  \\
			$0.15$ & $0.15$ & $0.1493762257$ & $\;\;6.2377430 \times 10^{-4}$& $0.65$ & $0.65$  & $0.6412748634$ & $ \;\;8.72513660\times 10^{-3}$  \\
			$0.17$ & $0.17$ & $0.1692088902$ & $\;7.9110980 \times 10^{-4}$& $0.67$ & $0.67$  & $0.6608105470$ & $\;\; 9.18945300\times 10^{-3}$  \\
			$0.19$ & $0.19$ & $0.1890234780$ & $\;\;9.7652200 \times 10^{-4}$& $0.69$ & $0.69$  & $0.6803299576$ & $\;\; 9.67004240\times 10^{-3}$  \\
			$0.21$ & $0.21$ & $0.2088206886$ & $\;\;1.1793114 \times 10^{-3}$& $0.71$ & $0.71$  & $0.6998307414$ & $\;\; 1.01692586\times 10^{-2}$  \\
			$0.23$ & $0.23$ & $0.2286012617$ & $\;\;1.3987383 \times 10^{-3}$& $0.73$ & $0.73$  & $0.7193100805$ & $\;\; 1.06899195\times 10^{-2}$  \\
			$0.25$ & $0.25$ & $0.2483659629$ & $\;\;1.6340371 \times 10^{-3}$& $0.75$ & $0.75$  & $0.7387646198$ & $\;\; 1.12353802\times 10^{-2}$  \\
			$0.27$ & $0.27$ & $0.2681155773$ & $\;\;1.8844227 \times 10^{-3}$& $0.77$ & $0.77$  & $0.7581903691$ & $\; 1.18096309\times 10^{-2}$  \\
			$0.29$ & $0.29$ & $0.2878508938$ & $\;\;2.1491062 \times 10^{-3}$& $0.79$ & $0.79$  & $0.7775825937$ & $\; 1.24174063\times 10^{-2}$  \\
			$0.31$ & $0.31$ & $0.3075726969$ & $\;\;2.4273031 \times 10^{-3}$ & $0.81$ & $0.81$  & $0.7969356971$ & $\;\;1.30643029 \times 10^{-2}$  \\
			$0.33$ & $0.33$ & $0.3272817556$ & $\;\;2.7182444 \times 10^{-3}$ & $0.83$ & $0.83$  & $0.8162430208$ & $\;\;1.37569792 \times 10^{-2}$  \\
			$0.35$ & $0.35$ & $0.3469788114$ & $\;\;3.0211886 \times 10^{-3}$ & $0.85$ & $0.85$  & $0.8354966758$ & $\;\; 1.45033242\times 10^{-2}$  \\
			$0.37$ & $0.37$ & $0.3666645658$ & $\;\;3.3354342 \times 10^{-3}$ & $0.87$ & $0.87$  & $0.8546872738$ & $\;\;1.53127262 \times 10^{-2}$  \\
			$0.39$ & $0.39$ & $0.3863396700$ & $\;\;3.6603300 \times 10^{-3}$ & $0.89$ & $0.89$  & $0.8738036297$ & $\;\; 1.61963703\times 10^{-2}$  \\
			$0.41$ & $0.41$ & $0.4060047106$ & $\;\;3.9952894 \times 10^{-3}$ & $0.91$ & $0.91$  & $0.8928323854$ & $\;\; 1.71676146\times 10^{-2}$  \\
			$0.43$ & $0.43$ & $0.4256602069$ & $\;\;4.3397931 \times 10^{-3}$ & $0.93$ & $0.93$  & $0.9117575067$ & $\;\;1.82424933 \times 10^{-2}$  \\
			$0.45$ & $0.45$ & $0.4453065831$ & $\;\;4.6934169 \times 10^{-3}$ & $0.95$ & $0.95$  & $0.9305597280$ & $\;\; 1.94402720\times 10^{-2}$  \\
			$0.47$ & $0.47$ & $0.4649441741$ & $\;\;5.0558259 \times 10^{-3}$ & $0.97$ & $0.97$  & $0.9492157979$ & $\;\; 2.07842021\times 10^{-2}$  \\
			$0.49$ & $0.49$ & $0.4845731957$ & $\;\;5.4268043 \times 10^{-3}$ & $0.99$ & $0.99$  & $0.9676975410$ & $\;\; 2.23024590\times 10^{-2}$  \\
			\noalign{\smallskip}\hline
	\end{tabular}}
\end{table}
\section{Conclusions}
\label{sec:5}
In this paper, a new numerical method has been proposed to solve nonlinear Volterra--Fredholm integral equations.  With this method, the perturbed system of nonlinear equations obtained by discretization is solved  by an iterative method, which is based on a hybrid of the method of contractive mapping and parameter continuation method. Lastly, an illustrative example is given to demonstrate  the effectiveness and convenience of the proposed method.

\end{document}